\numberwithin{equation}{section}
\newtheorem{thm}{Theorem}[section]
\newtheorem{lem}[thm]{Lemma}
\theoremstyle{definition}
\newtheorem{defn}[thm]{Definition}
\theoremstyle{proposition}
\newtheorem{prop}[thm]{Proposition}
\theoremstyle{remark}
\newtheorem{rem}[thm]{Remark}
\begin{document}

 \title{Existence of Nontrivial Solutions for the Nonlinear Equation on Locally Finite Graphs
}
\author{Ziliang Yang$^{\rm 1}$,\ \ Jiabao Su$^{\rm 2}$\thanks{Corresponding author.  $ \quad$ E-mail addresses: 2210501029@cnu.edu.cn (Z. Yang), sujb@cnu.edu.cn (J. Su), suncut@163.com (M. Sun).},\ \ Mingzheng Sun$^{\rm 3}$\\
{\small $^{\rm 1,2}$School of Mathematical Sciences, Capital Normal University,Beijing 100048, P. R. China} \\
 {\small $^{\rm 3}$College  of Science, North China University of Technology, Beijing 100144, P. R. China} }

 \maketitle
 \begin{abstract}
Suppose that $G=(V, E)$ be a locally finite and connected graph with symmetric weight and uniformly positive measure, where $V$ denotes the vertex set and $E$ denotes the edge set. We are concered with the following problem 
$$
\begin{cases}-\Delta u+h u=f(x, u), & \text { in } \Omega, \\ u=0, & \text { on } \partial \Omega,\end{cases}
$$
on the graph, where $h: \Omega \rightarrow \mathbb{R}$, $f: \Omega \times \mathbb{R} \rightarrow \mathbb{R}$ and $u: \Omega \rightarrow \mathbb{R}$. When $ f $ and $ h $ satisfies certain assumption conditions, we can ascertain the existence of one or two nontrivial solutions on the graph.
 \end{abstract}

{\bf Key words:} variational method; mountain pass theorem; nonlinear equation on locally finite graphs; critical point theory.

{\bf MR(2010) Subject Classification:}  {35A15; 34B45; 58E05.}


\section{Introduction }

Let $G=(V, E)$ be a weighted graph where $V$ denotes the vertex set and $E$ denotes the edge set. We write $x \sim y$ (vertex $x$ is connected to vertex $y$) if $(x, y) \in E$. A graph $G$ is called connected if, for any vertices $x, y \in V$, there exists a path $\left\{x_k\right\}_{k=0}^n$ that satisfies $x=x_0 \sim x_1 \sim x_2 \sim \cdots \sim x_n=y$. 

Let $\mu_{x y}$ be the edge weights with $\mu_{x y}=\mu_{y x}>0$, we call it a symmetric weight on $G$. Any weight $ \mu_{x y} $ gives rise to a function on vertices as follows:
\begin{equation}
  \mu(x)=\sum_{y \sim x} \mu_{x y}.
\end{equation}
If each vertex has a finite number of edges, we say that $G$ is a locally finite graph. If the number of vertices is finite, we say that $G$ is a finite graph. A finite graph is locally finite.

 The measure $\mu: V \rightarrow \mathbb{R}^{+}$ on the graph is a finite positive function on $G$. We call it a uniformly positive measure if there exists a constant $\mu_{\min }>0$ such that $\mu(x) \geqslant \mu_{\min }$ for all $x \in V$. Consider a domain $\Omega \subset V$. The distance $d(x, y)$ of two vertices $x, y \in \Omega$ is defined by the minimal number of edges which connect these two vertices. If the distance $d(x, y)$ is uniformly bounded from above for any $x, y \in \Omega$, we call $\Omega$ a bounded domain in $V$. The boundary of $\Omega$ is defined as
$$
\partial \Omega:=\{y \notin \Omega: \exists x \in \Omega \text { such that } x y \in E\}
$$
and the interior of $\Omega$ is denoted by $\Omega^{\circ}$. Obviously, we have that $\Omega^{\circ}=\Omega$ which is different from the Euclidean case.

From \cite{1}, for any function $u: V \rightarrow \mathbb{R}$, define the $\mu$-Laplacian of $u$  by
\begin{equation}
  \Delta_{\mu} u(x)=\frac{1}{\mu(x)} \sum_{y \sim x} u(y) \mu_{xy}-u(x).
\end{equation}
Using $(1.1)$, we get the following Laplacian for short of $ u $:
\begin{equation}
	\Delta u(x)=\frac{1}{\mu(x)} \sum_{y \sim x} \mu_{x y}(u(y)-u(x)),
\end{equation}
The associated gradient form is
\begin{equation}
	\begin{aligned}
	    \Gamma(u, v)(x)
		& = \frac{1}{2}\{\Delta(u(x) v(x))-u(x) \Delta v(x)-v(x) \Delta u(x)\}  \\
		& =\frac{1}{2 \mu(x)} \sum_{y \sim x} \mu_{x y}(u(y)-u(x))(v(y)-v(x)).
	\end{aligned}
\end{equation}
Please refer to the appendix for details of $(1.2)-(1.4)$. Write $\Gamma(u)=\Gamma(u, u)$.
The length of the gradient for $u$ is
$$
|\nabla u|(x)=\sqrt{ \Gamma(u, u)(x)}=\left(\frac{1}{2\mu(x)} \sum_{y \sim x} \mu_{x y}(u(y)-u(x))^2\right)^{1 / 2}.
$$
For any function $f: \Omega \rightarrow \mathbb{R}$, an integral of $f$ over $\Omega$ is defined by
$$
\int_{\Omega} f d \mu=\sum_{x \in \Omega} \mu(x) f(x).
$$
In \cite{17}, the Lebesgue space $L^p(\Omega)$ on the graph $G$ is
$$
L^p(\Omega)=\left\{u: \Omega \rightarrow \mathbb{R}, \quad\|u\|_{L^p(\Omega)}<+\infty\right\}, \quad 1 \leq p \leq \infty,
$$
where the norm of $u \in L^p(\Omega)$ is given as
$$
\|u\|_{L^p(\Omega)}= 
\begin{cases}
	\left(\int_{\Omega}|u|^p d \mu \right)^{\frac{1}{p}}=\left(\sum_{x \in \Omega} \mu(x)|u(x)|^p\right)^{\frac{1}{p}}, & 1 \leq p<\infty, \\ \sup _{x \in \Omega}|u(x)|, & p=\infty.
\end{cases}
$$

Let $C_c(\Omega)$ be the set of all functions with compact support, and $W_0^{1,2}(\Omega)$ be the completion of $C_c(\Omega)$ under the norm
\begin{equation}
   \|u\|_{W_0^{1,2}(\Omega)}^2=\int_{\Omega \cup \partial \Omega}|\nabla u|^2 d \mu+\int_{\Omega} u^2 d \mu,
\end{equation}
that is,
$$
W_0^{1,2}(\Omega)=\overline{C_c(\Omega)}^
{\|u\|_{W_0^{1,2}(\Omega)}^2}.
$$
It is clearly that $W_0^{1,2}(\Omega)$ is a Hilbert space with the inner product
$$
\langle u, v\rangle=\int_{\Omega \cup \partial \Omega}\Gamma(u, v) d \mu+\int_{\Omega} uv d \mu, \quad \forall u, v \in W_0^{1,2}(\Omega) .
$$

In accordance with the groundwork laid by Grigor'yan et al.  \cite{10,11}, the Sobolev space $W_0^{1,2}(\Omega)$ and its norm on graphs are defined by
$$
W_0^{1,2}(\Omega)=\left\{u: \Omega \rightarrow \mathbb{R}|u|_{\partial \Omega}=0, \int_{\Omega \cup \partial \Omega}|\nabla u|^2 d \mu<+\infty\right\}
$$
and
\begin{equation}
\|u\|_{W_0^{1,2}(\Omega)}=\left(\int_{\Omega \cup \partial \Omega}|\nabla u|^2 d \mu\right)^{\frac{1}{2}}.
\end{equation}

Motivated by papers \cite{2,4,24}, for all $x \in \Omega$, we study the following  equation on the graph

\begin{equation}
	\begin{cases}-\Delta u+h u=f(x, u), & \text { in } \Omega, \\ u=0, & \text { on } \partial \Omega,\end{cases}
\end{equation}
where $h: \Omega \rightarrow \mathbb{R}$, $f: \Omega \times \mathbb{R} \rightarrow \mathbb{R}$ and $u: \Omega \rightarrow \mathbb{R}$. $ \lambda_1(\Omega) $ is the first eigenvalue of the Laplacian with respect to Dirichlet boundary condition, which is defined by
\begin{equation}
   \lambda_1(\Omega)=\inf _{u \neq 0,\left.u\right|_{\partial \Omega}=0} \frac{\int_{\Omega \cup \partial \Omega}|\nabla u|^2 d \mu}{\int_{\Omega} u^2 d \mu}
\end{equation}

The study of discrete weighted Laplacians and an assortment of equations on graphs has garnered considerable scholarly interest of late. In \cite{2}, Grigor'yan, Lin and Yang studied nonlinear Schrödinger equations
\begin{equation}
   -\Delta u+h(x) u=f(x, u) \quad \text { in } V
\end{equation}
on a connected locally finite graph $G$.
In \cite{18}, Zhang and Zhao established the existence and convergence of ground state solutions for (1.9), when $b(x)=\lambda a(x)+1$ and $f(x, u)=|u|^{p-1} u$. In \cite{11}, Grigoryan, Lin and Yang proved that there exists a positive solution to
$$
\left\{\begin{aligned}
	-\Delta u-\alpha u & =|u|^{p-2} u, & & \text { in } \Omega^{\circ}, \\
	u & =0, & & \text { on } \partial \Omega.
\end{aligned}\right.
$$ 
In \cite{6}, Shou consider the nonlinear Schrödinger equations on the graph $ G $, where $ G $ satisfies the curvature-dimension type inequality.

On an Euclidean space, the standing wave solution to the Schrödinger equation was  researched by Rabinowitz \cite{14}. Furthermore, the existence of solutions under various conditions has been critically examined by acclaimed researchers such as Bartsch-Willem \cite{15},  Wang \cite{3}, Rabinowitz-Su-Wang \cite{4}, Li \cite{16} and others. 

On the other hand, research has demonstrated that established approaches for examining the existence and multiplicity of solutions for partial differential equations delineated on open domains in Euclidean spaces can be equivalently applicable to partial differential equations on fractals. Using the mountain pass theorem and the saddle point theorem, Falconer-Hu \cite{13} obtained existence results on the Sierpiński gasket. Specific attention has been given to partial differential equations  on the Sierpinski gasket. Reference is made to the groundbreaking work as stated in paper \cite{20}, which discussed nonlinear elliptic equations on the Sierpinski gasket in a two-dimensional Euclidean space. For the Sierpiński gasket, we can also refer to \cite{21,22,23,24,25} and so on.

Now, we define a space of functions
$$
H=\left\{u \in W_0^{1,2}(\Omega): \int_{\Omega} h u^2 d \mu<+\infty\right\}
$$
with a norm
\begin{equation}
  \|u\|_{H}=\left(\int_{\Omega \cup \partial \Omega}|\nabla u|^2 d \mu+\int_{\Omega} hu^2 d \mu\right)^{1 / 2} 
\end{equation}
on the graph. We see that $H$ is also a Hilbert space with the inner product
$$
\langle u, v\rangle_{H}=\int_{\Omega \cup \partial \Omega}\Gamma(u, v) d \mu+\int_{\Omega} huv d \mu, \quad \forall u, v \in H .
$$

We define a functional on $H$ by
\begin{equation}
  \Phi\left(u\right)=\frac{1}{2} \left(\int_{\Omega \cup \partial \Omega} |\nabla u|^2 d \mu+ \int_{\Omega}h u^2 d \mu\right)-\int_{\Omega} F(x, u) d \mu,
\end{equation}
where $   F(x, u)=\int_0^u f(x, t) d t$ is the primitive function of $f$. Thus, we have

\begin{equation}
   \left\langle\Phi^{\prime}\left(u\right), u\right\rangle =
   \int_{\Omega \cup \partial \Omega} |\nabla u|^2 d \mu+ \int_{\Omega}h u^2 d \mu-\int_{\Omega} f\left(x, u\right) u d \mu.
\end{equation}

The main results of this paper are the following:

\begin{thm} 
	 Suppose that $G=(V, E)$ be a locally finite and connected graph with symmetric weight and uniformly positive measure. Let $h: \Omega \rightarrow \mathbb{R}$ be a function satisfying the hypotheses
	
	$\left(H_1\right)$ there exists a constant $h_0>0$ such that $h(x) \geq h_0$ for all $x \in \Omega$;
	
    $\left(H_2\right)$ $ 1 / h \in L^1(\Omega)$.
    
	Suppose that $f: \Omega \times \mathbb{R} \rightarrow \mathbb{R}$ satisfies the following hypotheses:
	
	$\left(F_1\right)$  $ f \in C^1(\Omega \times \mathbb{R}, \mathbb{R})$
	
	$\left(F_2\right)$  $f(x, 0)=0=f_u(x, 0)$.
	
    $\left(F_3\right)$ There is $C>0$ such that
	 \begin{equation}
	   |f(x, u)| \leq C\left(1+|u|^{p-1}\right), \text { for some } 2< p <2^*= \begin{cases}+\infty, & N=1,2, \\ \frac{2 N}{N-2}, & N=3,\end{cases}
	 \end{equation}
	 where $ x \in \Omega $ and $ u \in \mathbb{R} $.

	$\left(F_4\right)$ There is $\theta>2, M>0$ such that
	\begin{equation}
	  0<\theta F(x, u) \leq u f(x, u) \quad x \in \Omega,\ |u| \geq M,
	\end{equation}
	where $F(x, u):=\int_0^u f(x, t) d t$.
	
   Hence, equation $(1.7)$ is guaranteed to have at least one nontrivial solution.
\end{thm}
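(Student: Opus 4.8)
The plan is to realize nontrivial solutions of $(1.7)$ as nonzero critical points of the energy functional $\Phi$ from $(1.11)$ and to produce one such critical point via the mountain pass theorem. First I would record the variational structure: under $(F_1)$ and $(F_3)$ the functional $\Phi$ is well defined and of class $C^1$ on the Hilbert space $H$, with
$$
\langle\Phi'(u),\varphi\rangle=\int_{\Omega\cup\partial\Omega}\Gamma(u,\varphi)\,d\mu+\int_\Omega hu\varphi\,d\mu-\int_\Omega f(x,u)\varphi\,d\mu .
$$
Testing this identity against the single-vertex functions $\varphi=\mathbf 1_{\{x\}}\in C_c(\Omega)$ and invoking the summation-by-parts identity behind $(1.2)$--$(1.4)$ shows that every critical point of $\Phi$ is a pointwise solution of $-\Delta u+hu=f(x,u)$ on $\Omega$, while $u|_{\partial\Omega}=0$ is built into $H\subset W_0^{1,2}(\Omega)$. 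Hence it suffices to find $u\neq0$ with $\Phi'(u)=0$.

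The decisive step, which I expect to be the main obstacle, is a compactness lemma: under $(H_1)$ and $(H_2)$ the embedding $H\hookrightarrow L^q(\Omega)$ is compact for every $1\le q\le\infty$. Two continuous embeddings come first. From $(H_1)$, $\int_\Omega u^2\,d\mu\le h_0^{-1}\int_\Omega hu^2\,d\mu\le h_0^{-1}\|u\|_H^2$, and uniform positivity of $\mu$ gives $\mu_{\min}|u(x)|^2\le\int_\Omega u^2\,d\mu$, hence the pointwise bound $\|u\|_{L^\infty}\le(\mu_{\min}h_0)^{-1/2}\|u\|_H$; thus $H\hookrightarrow L^2\cap L^\infty$. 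The role of $(H_2)$ is tail control: by Cauchy--Schwarz $\int_A|u|\,d\mu\le(\int_A\tfrac1h\,d\mu)^{1/2}(\int_A hu^2\,d\mu)^{1/2}$ for any $A\subset\Omega$, so $1/h\in L^1(\Omega)$ yields $H\hookrightarrow L^1$ and, since $\int_{\Omega\setminus A}\tfrac1h\,d\mu$ is arbitrarily small for a large finite $A$, uniform smallness of the $L^1$ mass of any $\|\cdot\|_H$-bounded family off finite sets. Given $u_n\rightharpoonup u$ in $H$, the $L^\infty$ bound forces pointwise convergence along a subsequence; this upgrades to $L^1$ convergence on the finite set $A$ and is controlled by the tail estimate off $A$, and interpolation $\|u_n-u\|_{L^q}^q\le\|u_n-u\|_{L^\infty}^{q-1}\|u_n-u\|_{L^1}$ gives strong convergence in every $L^q$.

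With compactness in hand the mountain pass geometry is routine. Near the origin I would combine $(F_2)$ and $(F_3)$ to get, for any $\varepsilon>0$, a bound $|F(x,u)|\le\varepsilon u^2+C_\varepsilon|u|^p$; with $H\hookrightarrow L^2,L^p$ this yields $\Phi(u)\ge\tfrac12\|u\|_H^2-\varepsilon h_0^{-1}\|u\|_H^2-C\|u\|_H^p$, so that $\Phi(u)\ge\alpha>0$ on a small sphere $\|u\|_H=\rho$, using $p>2$. For the far point I would integrate $(F_4)$ to obtain $F(x,u)\ge c_1|u|^\theta-c_2$ along a fixed direction $u_0$, whence $\Phi(tu_0)\to-\infty$ as $t\to\infty$ because $\theta>2$, producing $e$ with $\Phi(e)\le0$ and $\|e\|_H>\rho$.

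Finally I would verify the Palais--Smale condition. If $\Phi(u_n)\to c$ and $\Phi'(u_n)\to0$, the Ambrosetti--Rabinowitz inequality $(F_4)$, splitting $\Omega$ into $\{|u_n|\ge M\}$ where $\tfrac1\theta fu_n-F\ge0$ and $\{|u_n|<M\}$ where $|\tfrac1\theta fu_n-F|\le C(|u_n|+|u_n|^p)$ is controlled in $L^1$ through the $H\hookrightarrow L^1$ bound from $(H_2)$, gives
$$
c+o(1)+o(1)\|u_n\|_H=\Phi(u_n)-\tfrac1\theta\langle\Phi'(u_n),u_n\rangle\ge\Big(\tfrac12-\tfrac1\theta\Big)\|u_n\|_H^2-C\|u_n\|_H ,
$$
so $(u_n)$ is bounded; the compact embedding extracts an $L^p$-convergent subsequence, and feeding this back into $\Phi'(u_n)\to0$ upgrades it to strong convergence in $H$. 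The mountain pass theorem then yields a critical value $c\ge\alpha>0=\Phi(0)$, and the associated critical point is the desired nontrivial solution.
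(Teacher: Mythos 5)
Your proposal is correct and follows essentially the same route as the paper: establish the compact embedding $H\hookrightarrow L^q(\Omega)$ (the paper's Lemma 2.7), verify the mountain pass geometry near $0$ via $(F_2)$--$(F_3)$ and at infinity by integrating the Ambrosetti--Rabinowitz condition $(F_4)$ along a single-vertex function (Lemmas 3.1--3.2), prove boundedness of Palais--Smale sequences from $\Phi(u_n)-\tfrac1\theta\langle\Phi'(u_n),u_n\rangle$ (Lemma 3.3), and conclude with the mountain pass theorem and the pointwise-solution identification (Proposition 2.6). If anything, your use of $(H_2)$ via Cauchy--Schwarz for tail control in the compactness argument, and your explicit upgrade from a bounded PS sequence to a strongly convergent one, are spelled out more carefully than in the paper's Lemma 2.7 and Lemma 3.3.
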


\begin{rem}
	The assumption conditions of Theorem 1.1 were established by drawing on the assumption conditions put forth in references \cite{2,4}. The condition $\left(F_4\right)$ pertains to the widely recognized Ambrosetti-Rabinowitz type condition as defined in reference \cite{12}. Observe that the Ambrosetti-Rabinowitz condition $\left(F_4\right)$ implies 
    \begin{equation}
	 \lim_{u\rightarrow
		\infty}\frac{F\left(x,u\right)}{u^2}=+\infty. 
	 \end{equation}
	Therefore $F$ grows at a “ super-quadratic” rate.
\end{rem}

\begin{thm}
	Suppose that $G=(V, E)$ be a locally finite and connected graph with symmetric weight and uniformly positive measure. Let $h: \Omega \rightarrow \mathbb{R}$ be a function satisfying 	$\left(H_1\right)$ and $ \left(H_2\right) $.

	Suppose that $f: \Omega \times \mathbb{R} \rightarrow \mathbb{R}$ satisfies $\left(F_1\right)$,$ \left(F_3\right) $ and
	
	$\left(F_5\right)  \frac{f\left(x,u\right)}{u}$ is nondecreasing in $ u>0 $, for all $ x \in \Omega$.
	
	$\left(F_6\right) \lim_{u\rightarrow \infty}\frac{f\left(x,u\right)}{u}=+\infty $.
	
	Hence, equation $(1.7)$ is guaranteed to have at least one nontrivial solution.
\end{thm}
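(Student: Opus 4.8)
The plan is to obtain the nontrivial solution as a critical point of the energy functional $\Phi$ in (1.12), via the mountain pass theorem in its Cerami form, paralleling the treatment of Theorem 1.1 but replacing the role of $(F_4)$ by the monotonicity hypotheses. Two preliminary facts drive everything. First, $(H_1)$ and $(H_2)$ yield, in the Grigor'yan--Lin--Yang framework (cf. \cite{2,10,11}), that the embedding $H \hookrightarrow L^q(\Omega)$ is \emph{compact} for every $q\in[2,+\infty]$; this compensates for the lack of a classical Rellich theorem and makes the nonlinear part of $\Phi$ weakly continuous with a compact derivative. Second, $(F_5)$ has two elementary consequences: writing $Q(x,u):=\tfrac12 u f(x,u)-F(x,u)$, the inequality $\frac{d}{du}\big(f(x,u)/u\big)\ge 0$ gives $Q'(x,u)=\tfrac12\big(uf_u(x,u)-f(x,u)\big)\ge0$, so $Q(x,\cdot)$ is nonnegative (since $Q(x,0)=0$) and nondecreasing on $[0,+\infty)$, while $u\mapsto F(x,u)/u^2$ is nondecreasing on $(0,+\infty)$; together with $(F_6)$ the latter forces $F(x,u)/u^2\to+\infty$, the genuine super-quadratic behavior of Remark 1.2.

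Next I would verify the mountain pass geometry. For the local part, $(F_3)$ with the vanishing of $f$ at the origin gives $\int_\Omega F(x,u)\,d\mu \le \varepsilon\|u\|_{L^2}^2 + C_\varepsilon\|u\|_{L^p}^p$; since $p>2$ and $H$ embeds continuously into $L^2$ and $L^p$, a small $\varepsilon$ produces $\rho,\alpha>0$ with $\Phi(u)\ge\alpha$ on $\|u\|_H=\rho$. For the descent direction I would fix $u_0\in C_c(\Omega)$ with $u_0>0$ and use super-quadraticity to get $\Phi(tu_0)\to-\infty$, so $e:=t_0u_0$ with $t_0$ large satisfies $\|e\|_H>\rho$ and $\Phi(e)<0$. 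This gives the mountain pass value $c\ge\alpha>0$.

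The crux, and the step I expect to require the most care, is boundedness of Cerami sequences, precisely because $(F_4)$ is unavailable. Let $\Phi(u_n)\to c$ and $(1+\|u_n\|_H)\|\Phi'(u_n)\|_{H^*}\to0$, so in particular $\langle\Phi'(u_n),u_n\rangle\to0$. Assuming $\|u_n\|_H\to\infty$ and setting $v_n:=u_n/\|u_n\|_H$, along a subsequence $v_n\rightharpoonup v$ in $H$ and $v_n\to v$ in $L^q(\Omega)$ and pointwise. If $v\neq0$, then $|u_n(x)|\to\infty$ wherever $v(x)\neq0$, and $F(x,u_n)/u_n^2\to\infty$ with Fatou gives $\int_\Omega F(x,u_n)\,d\mu/\|u_n\|_H^2\to+\infty$, contradicting $\Phi(u_n)/\|u_n\|_H^2=\tfrac12-\int_\Omega F(x,u_n)\,d\mu/\|u_n\|_H^2\to0$. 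If $v=0$, I would run Jeanjean's monotonicity trick: pick $t_n\in[0,1]$ with $\Phi(t_nu_n)=\max_{t\in[0,1]}\Phi(tu_n)$. For any fixed $R>0$ and $n$ large, $Rv_n$ is an admissible competitor, so $\Phi(t_nu_n)\ge\Phi(Rv_n)=\tfrac{R^2}{2}-\int_\Omega F(x,Rv_n)\,d\mu\to\tfrac{R^2}{2}$ (the integral vanishes since $v_n\to0$ in $L^2\cap L^p$ and $F$ is subcritical); as $R$ is arbitrary, $\Phi(t_nu_n)\to+\infty$. But $t_n$ is an interior maximizer, so $\langle\Phi'(t_nu_n),t_nu_n\rangle=0$, whence $\Phi(t_nu_n)=\int_\Omega Q(x,t_nu_n)\,d\mu\le\int_\Omega Q(x,u_n)\,d\mu=\Phi(u_n)-\tfrac12\langle\Phi'(u_n),u_n\rangle\to c$, using $0\le t_n\le1$ and the monotonicity of $Q(x,\cdot)$. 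This contradicts $\Phi(t_nu_n)\to+\infty$, so $(u_n)$ is bounded. Convergence is then routine: writing $\Phi'(u)=u-K(u)$ with $K$ the Riesz representative of $v\mapsto\int_\Omega f(x,u)v\,d\mu$, compactness of the embedding makes $K$ compact, so $u_n=\Phi'(u_n)+K(u_n)$ converges strongly along a subsequence.

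With the Cerami condition verified at level $c$, the mountain pass theorem yields a critical point $u^*$ with $\Phi(u^*)=c\ge\alpha>0=\Phi(0)$, so $u^*\neq0$ is a nontrivial weak solution of (1.7). I expect the delicate point to be keeping the two one-sided facts from $(F_5)$--$(F_6)$ consistent throughout the $v=0$ analysis: the nonnegativity and monotonicity of $Q(x,\cdot)$ and the super-quadraticity are most transparent on the sign for which $(F_5)$ is imposed, so the cleanest route is to produce a fixed-sign solution (or to extend the monotonicity evenly), which closes the argument without extra hypotheses.
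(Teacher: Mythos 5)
Your proposal follows essentially the same strategy as the paper: mountain pass geometry for $\Phi$, the Cerami condition in place of $(PS)$, and boundedness of Cerami sequences obtained from the monotonicity hypothesis $(F_5)$ combined with the superlinearity $(F_6)$ applied to the normalized sequence $v_n$. The only real difference is in how the monotonicity trick is implemented. The paper isolates it as Lemma 4.1, the inequality $\Phi(tu_n)\le \frac{1+t^2}{2n}+\Phi(u_n)$ (whose proof is deferred to the Li--Zhou reference), then rescales $u_n$ to the fixed sphere of radius $2\sqrt{c_0}$, uses $(F_6)$ to show $v^{+}=0$ and hence $\Phi(v_n)\to 2c_0$, and contradicts this with $\Phi(v_n)=\Phi(t_nu_n)\to c_0$. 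You instead run Jeanjean's version with $t_n$ the maximizer of $t\mapsto\Phi(tu_n)$ on $[0,1]$ and the auxiliary function $Q(x,u)=\tfrac12 uf(x,u)-F(x,u)$, showing $\Phi(t_nu_n)\to+\infty$ while $\int_\Omega Q(x,t_nu_n)\,d\mu\le\int_\Omega Q(x,u_n)\,d\mu\to c$. These are two standard renditions of the same idea and buy the same thing; your version has the advantage of being self-contained (the paper never actually proves its Lemma 4.1) and of treating the case $v\neq 0$ by Fatou rather than through the $2c_0$ versus $c_0$ bookkeeping. Two caveats apply equally to both arguments and you correctly flag the first: $(F_5)$ and $(F_6)$ are stated only for $u>0$, so the monotonicity of $Q$ and the superquadratic growth of $F$ are uncontrolled on the negative half-line (the paper silently writes $F(x,v_n)=F(x,v_n^{+})$, which is unjustified as stated); and the lower mountain-pass estimate near the origin really uses $(F_2)$, which is assumed in Theorem 1.1 but not listed among the hypotheses of Theorem 1.3, a gap shared by the paper's own proof, which invokes the $\delta$ of Lemma 3.1 without comment.
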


\begin{rem}
	The condition $\left(F_6\right)$ is weaker than the condition $\left(F_4\right)$, and (1.15) can also be derived from the condition $\left(F_6\right)$. 
\end{rem}

\begin{thm}
   Suppose that $G=(V, E)$ be a locally finite and connected graph with symmetric weight and uniformly positive measure. Let $h: \Omega \rightarrow \mathbb{R}$ be a function satisfying 	$\left(H_1\right)$ or
  
   $\left(H_3\right)$   
   $$
  	\int_{\Omega} h(x) d \mu \leq\frac{1}{\mu_{\min} h_0}
   $$
  and $ \left(H_2\right) $.
   Suppose that $f: \Omega \times \mathbb{R} \rightarrow \mathbb{R}$ satisfies $\left(F_1\right)$,$ \left(F_3\right) $,$ \left(F_4\right) $ and
   
   $\left(F_7\right)$ $f(x, 0) \neq 0$ for any $ x \in \Omega. $
   
  Hence,  for any $\rho>0$ and there exist a positive constant $\beta$ such that
  \begin{equation}
     1<\beta+1 \leq \frac{\rho}{2 \max_{\substack{x \in \Omega \\|u| \leq \kappa \sqrt{\rho}}}\left|\int_0^u f(x, t) d t\right|},
  \end{equation}
   where 
   \begin{equation}
      	\kappa:= \begin{cases}
      		\left(\mu_{\min} h_0\right)^{\frac{1}{2}}, & \text { if $( H_1)$ holds, } \\
      	 \frac{\left(\mu_{\min} h_0\right)^{\frac{1}{2}}}{\sqrt{1-\mu_{\min} h_0 \int_{\Omega}h(x) d \mu}}, & \text { if $(H_3)$ holds, }
      	\end{cases}
   \end{equation}
   equation $(1.7)$ is guaranteed to have at least two nontrivial solutions one of which lies in 
   $$
   \mathbb{B}_{\rho}:=\left\{u \in W_0^{1,2}(\Omega):\|u\|_H<\sqrt{\rho}\right\} .
   $$
\end{thm}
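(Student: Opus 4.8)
The plan is to produce the two solutions by two different variational mechanisms resting on a single barrier estimate: a constrained (local) minimisation inside $\mathbb{B}_\rho$ for the first solution, and the Mountain Pass Theorem across the sphere $\partial\mathbb{B}_\rho$ for the second. Throughout I would work in the Hilbert space $H$ with the functional $\Phi$ of $(1.11)$, which is of class $C^1$ by $(F_1)$ and $(F_3)$ with derivative $(1.12)$. Two facts I would isolate at the outset are: (i) the pointwise embedding $\|u\|_\infty\le\kappa\|u\|_H$ valid under $(H_1)$ (respectively $(H_3)$) with $\kappa$ as in $(1.17)$, which guarantees that every $u\in\overline{\mathbb{B}_\rho}$ takes values in $[-\kappa\sqrt\rho,\kappa\sqrt\rho]$; and (ii) the compactness of the embedding $H\hookrightarrow L^q(\Omega)$ for the relevant exponents, which is where $(H_2)$ (namely $1/h\in L^1$) enters and which makes the map $u\mapsto\int_\Omega F(x,u)\,d\mu$ weakly continuous with compact derivative. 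I would also record that by $(F_7)$ the origin is \emph{not} a solution, since $\langle\Phi'(0),v\rangle=-\int_\Omega f(x,0)v\,d\mu\not\equiv0$; consequently every critical point produced below is automatically nontrivial.

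For the first solution I would set $m_\rho:=\inf_{\overline{\mathbb{B}_\rho}}\Phi$ and show it is attained at an interior point. Boundedness below on $\overline{\mathbb{B}_\rho}$ is immediate because the values of $u$ are confined to $[-\kappa\sqrt\rho,\kappa\sqrt\rho]$ and $F$ is continuous; weak lower semicontinuity follows since the quadratic part of $\Phi$ is weakly lsc and, by (ii), $\int_\Omega F(x,u)\,d\mu$ is weakly continuous; as $\overline{\mathbb{B}_\rho}$ is weakly compact, $m_\rho$ is achieved at some $u_1$. Next, using $(F_7)$ I would pick a direction $v$ with $\int_\Omega f(x,0)v\,d\mu>0$ and check that $\Phi(tv)<0$ for small $t>0$, so that $m_\rho<0=\Phi(0)$. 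The role of $(1.16)$ is to erect a positive barrier on the sphere: for $\|u\|_H=\sqrt\rho$ one has $\Phi(u)=\tfrac{\rho}{2}-\int_\Omega F(x,u)\,d\mu$, and controlling $\int_\Omega F$ by the maximum appearing in $(1.16)$ yields $\inf_{\partial\mathbb{B}_\rho}\Phi\ge\tfrac{\rho}{2}\cdot\tfrac{\beta}{\beta+1}=:\alpha>0$. Since $m_\rho<0<\alpha$, the minimiser $u_1$ cannot lie on $\partial\mathbb{B}_\rho$, hence $\|u_1\|_H<\sqrt\rho$ and $u_1$ is a genuine local minimiser of $\Phi$ on $H$ with $\Phi'(u_1)=0$: a first nontrivial solution, lying in $\mathbb{B}_\rho$ with $\Phi(u_1)=m_\rho<0$.

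For the second solution I would apply the Mountain Pass Theorem with base point $u_1$. The barrier $\alpha>0$ already provides the mountain, since $\Phi(u_1)<0<\alpha\le\Phi$ on $\partial\mathbb{B}_\rho$. The far side is supplied by $(F_4)$: the Ambrosetti--Rabinowitz condition forces $F(x,u)\ge c_1|u|^\theta-c_2$ for $|u|$ large, whence $\Phi(tw)\to-\infty$ as $t\to+\infty$ along any fixed $w\ne0$, and I would fix $e$ with $\|e\|_H>\sqrt\rho$ and $\Phi(e)<0$. Every path $\gamma$ joining $u_1$ (inside the ball) to $e$ (outside) must meet $\partial\mathbb{B}_\rho$, so the minimax level $c:=\inf_\gamma\max_{t}\Phi(\gamma(t))\ge\alpha>0>\max\{\Phi(u_1),\Phi(e)\}$ is a genuine mountain pass level. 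To close, I would verify $(PS)_c$: $(F_4)$ makes any Palais--Smale sequence bounded through the standard estimate $c+o(1)\|u_n\|_H\ge\Phi(u_n)-\tfrac1\theta\langle\Phi'(u_n),u_n\rangle\ge(\tfrac12-\tfrac1\theta)\|u_n\|_H^2-C$, and then the compact embedding (ii) upgrades weak to strong convergence in $H$. The theorem then yields a critical point $u_2$ with $\Phi(u_2)=c>0$; since $\Phi(u_2)>0>\Phi(u_1)$ we have $u_2\ne u_1$, giving the second nontrivial solution.

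The step I expect to be the main obstacle is the barrier estimate underlying $\alpha>0$, i.e.\ bounding $\int_\Omega F(x,u)\,d\mu$ on $\partial\mathbb{B}_\rho$ strictly below $\rho/2$ by means of $(1.16)$. On an infinite domain this requires combining the uniform pointwise bound $|u(x)|\le\kappa\sqrt\rho$ from $(H_1)$/$(H_3)$ with the integrability furnished by $(H_2)$ and the growth control $(F_3)$, and it is precisely here that the exact form of $\kappa$ in $(1.17)$ and of $\beta$ in $(1.16)$ are dictated; both the attainment of the minimiser at an interior point and the strictness $c>\Phi(u_1)$ hinge on securing this barrier with a definite positive gap. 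By comparison, the verification of $(PS)_c$ is fairly routine once the compact embedding from $(H_2)$ is available.
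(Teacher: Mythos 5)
Your route is genuinely different from the paper's: the paper proves Theorem 1.5 by plugging $\Theta(u)=\|u\|_H^2$ and $\Psi(u)=\int_\Omega F(x,u)\,d\mu$ into the abstract two--critical--points alternative of Lemma 2.8 (the Ricceri/Pucci--Serrin combination), checking weak lower semicontinuity and coercivity of $\Theta$, weak continuity of $\Psi$, the $(PS)$ condition (Lemma 3.3), unboundedness from below (Lemma 3.2) to exclude the ``strict global minimum'' branch, and the estimate $\chi(\rho)\le\frac{1}{2(\beta+1)}|\Omega|$ coming from $(1.16)$--$(1.17)$. You instead unpack that abstract lemma by hand: a constrained minimisation in $\overline{\mathbb{B}_\rho}$ pushed to a negative level by $(F_7)$, and a mountain pass over the sphere $\partial\mathbb{B}_\rho$ using $(F_4)$ for the far side and for $(PS)_c$. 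This is a legitimate and more self-contained argument; the ingredients you invoke are exactly the paper's Lemmas 2.7, 3.2, 3.3, and your observation that $(F_7)$ forces $\Phi'(0)\neq 0$ is the same mechanism the paper uses to guarantee nontriviality.

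The genuine gap is the one you yourself flag and then leave open: the barrier $\inf_{\partial\mathbb{B}_\rho}\Phi\ge\frac{\rho}{2}\cdot\frac{\beta}{\beta+1}>0$ does not follow from $(1.16)$ as you state it. The embedding gives $|u(x)|\le\kappa\sqrt\rho$ for $u\in\overline{\mathbb{B}_\rho}$, hence the \emph{pointwise} bound $|F(x,u(x))|\le\max_{x\in\Omega,\,|t|\le\kappa\sqrt\rho}\bigl|\int_0^t f(x,s)\,ds\bigr|\le\frac{\rho}{2(\beta+1)}$; but integrating over $\Omega$ produces the factor $|\Omega|=\int_\Omega d\mu=\sum_{x\in\Omega}\mu(x)$, so all one gets on the sphere is $\Phi(u)\ge\frac{\rho}{2}\bigl(1-\frac{|\Omega|}{\beta+1}\bigr)$, which is positive only if $\beta+1>|\Omega|$ --- a requirement that $(1.16)$ (which only gives $\beta+1>1$) does not supply. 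Without this, the minimiser need not be interior and the mountain-pass level need not exceed $\Phi(u_1)$, so both halves of your construction fail at once. To be fair, the paper's own proof carries the identical factor: it ends with $\chi(\rho)\le\frac{|\Omega|}{2(\beta+1)}$ and then needs $\chi(\rho)<\zeta=\frac12$, i.e.\ the same unstated condition $\beta+1>|\Omega|$. So you have faithfully reproduced the weak point rather than introduced a new one; but since your whole argument pivots on this barrier and you explicitly defer it, the proof as submitted is incomplete precisely there, and neither $(H_2)$ nor $(F_3)$ (which you suggest as the missing ingredients) removes the factor $|\Omega|$. Everything else --- attainment of the minimum by weak compactness, $m_\rho<0$ from $(F_7)$, boundedness of $(PS)$ sequences from $(F_4)$, and $u_2\neq u_1$ by comparing energy levels --- is correct.
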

	
	\begin{rem}
		Theorem 1.5 is an extension of Theorem 1.1. In Theorem 1.5 and the following Theorem 1.7, our requirements for the condition $ h(x) $ are more stringent than the assumption conditions in reference \cite{24}. Moreover, $\left(H_3\right)$, (1.16), and (1.17) are all new assumption conditions. The condition $\left(F_7\right)$ assures that all the solutions of problem (1.7), if any, are nontrivial. While these two theorems bear structural similarities, they differ in terms of the specific demands on assumptions and the distinctiveness of their results. The subtle distinctions they exhibit are indeed notable.
	\end{rem}

\begin{thm}
	Suppose that $G=(V, E)$ be a locally finite and connected graph with symmetric weight and uniformly positive measure. Let $h: \Omega \rightarrow \mathbb{R}$ be a function satisfying 	$\left(H_1\right)$ and $ \left(H_2\right) $.
	Suppose that $f: \Omega \times \mathbb{R} \rightarrow \mathbb{R}$ satisfies $\left(F_1\right)$, $ \left(F_3\right) $, $ \left(F_4\right) $, $ \left(F_7\right) $and
	
	$\left(F_8\right)$ 
	$$
     \max_{(x, u) \in \Omega \times\left[-M_0, M_0\right]}\left|\int_0^u f(x, t) d t\right| \leq \frac{M_0^2}{2(\beta+1) \mu_{\min} h_0}.
	$$
	
	Hence, equation $(1.7)$ is guaranteed to have at least two nontrivial solutions one of which lies in 
	$$
	   \mathbb{B}_{\frac{M_0^2}{\mu_{\min} h_0}}:=\left\{u \in W_0^{1,2}(\Omega):\|u\|_H<\sqrt{\frac{M_0^2}{\mu_{\min } h_0}}\right\} .
	$$
\end{thm}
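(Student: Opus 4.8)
The plan is to read Theorem 1.7 as the specialization of Theorem 1.5 obtained by taking $\rho=M_0^2/(\mu_{\min}h_0)$. Under $(H_1)$ the constant in (1.17) is $\kappa=(\mu_{\min}h_0)^{1/2}$, so $\kappa\sqrt{\rho}=(\mu_{\min}h_0)^{1/2}\cdot M_0(\mu_{\min}h_0)^{-1/2}=M_0$; hence the maximum in (1.16) runs over $\Omega\times[-M_0,M_0]$, and (1.16) rearranges to $\max_{(x,u)\in\Omega\times[-M_0,M_0]}\big|\int_0^u f(x,t)\,dt\big|\le M_0^2/\big(2(\beta+1)\mu_{\min}h_0\big)$, which is exactly $(F_8)$. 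All the remaining hypotheses of Theorem 1.7, namely $(H_1)$, $(H_2)$, $(F_1)$, $(F_3)$, $(F_4)$, $(F_7)$, coincide with those of Theorem 1.5 in its $(H_1)$ branch, so Theorem 1.5 applied with this $\rho$ produces two nontrivial solutions, one lying in $\mathbb{B}_{\rho}=\mathbb{B}_{M_0^2/(\mu_{\min}h_0)}$. Thus, modulo Theorem 1.5, the statement follows at once; the role of $(F_8)$ is simply to package (1.16) in the scale-invariant form dictated by the choice $\rho=M_0^2/(\mu_{\min}h_0)$.

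It is worth recording the mechanism that Theorem 1.5 supplies, since that is where the real content sits. Working on the Hilbert space $H$ with the $C^1$ functional $\Phi$ of (1.11), whose critical points are precisely the weak solutions of (1.7), I would first note that $(F_7)$ forces $\langle\Phi'(0),v\rangle=-\int_\Omega f(x,0)v\,d\mu\not\equiv0$, so the origin is never a critical point and every solution produced below is automatically nontrivial. The first solution is obtained by minimizing $\Phi$ over the closed ball $\overline{\mathbb{B}_{\rho}}$: the growth bound $(F_3)$ together with the compact embedding of $H$ into the relevant Lebesgue spaces --- which is exactly what $(H_1)$ and $(H_2)$ buy on a locally finite graph --- makes $u\mapsto\int_\Omega F(x,u)\,d\mu$ weakly continuous, so $\Phi$ is weakly lower semicontinuous and attains its infimum on the weakly compact set $\overline{\mathbb{B}_{\rho}}$. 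The calibration (1.16)/$(F_8)$ is designed precisely so that $\int_\Omega F(x,u)\,d\mu\le\rho/(2(\beta+1))$ throughout $\overline{\mathbb{B}_{\rho}}$, whence on the sphere $\partial\mathbb{B}_{\rho}$ one has $\Phi(u)\ge\tfrac{\rho}{2}-\tfrac{\rho}{2(\beta+1)}=\tfrac{\rho}{2}\cdot\tfrac{\beta}{\beta+1}>0$; since $\Phi(0)=0$ and $0\in\mathbb{B}_\rho$, the infimum over $\overline{\mathbb{B}_{\rho}}$ is $\le0<\inf_{\partial\mathbb{B}_{\rho}}\Phi$ and is therefore attained at an interior point $u_1$, a genuine and, by $(F_7)$, nontrivial critical point lying in $\mathbb{B}_{\rho}$.

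For the second solution I would run the mountain pass theorem anchored at the local minimizer $u_1$ rather than at $0$ --- the latter is impossible here because $(F_7)$ prevents $0$ from being an extremum. The Ambrosetti-Rabinowitz condition $(F_4)$ gives $\Phi(tv)\to-\infty$ as $t\to\infty$ along any fixed $v\ne0$, so there is a point $e$ with $\Phi(e)<\Phi(u_1)$ far outside $\mathbb{B}_\rho$; with $u_1$ a local minimum, the pair $(u_1,e)$ has the mountain pass geometry, and $(F_4)$ also yields the boundedness of Palais-Smale sequences, which the compact embedding upgrades to the full $(PS)$ condition. This produces a second critical point $u_2$ with $\Phi(u_2)=c>\Phi(u_1)$ (the strictness following from the mountain pass alternative), so $u_2\ne u_1$, and $u_2\ne0$ by $(F_7)$. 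I expect the two genuine obstacles to be analytic rather than structural: first, making the estimate $\int_\Omega F(x,u)\,d\mu\le\rho/(2(\beta+1))$ on $\overline{\mathbb{B}_{\rho}}$ rigorous --- this is where the per-vertex bound $(F_8)$ must be combined with the $L^1$- and $L^p$-control of $u$ coming from $(H_1)$--$(H_2)$ to dominate the (possibly infinite) sum defining $\int_\Omega F$ --- and second, verifying the $(PS)$ condition on a domain $\Omega$ with infinitely many vertices, for which the compactness of the embedding $H\hookrightarrow L^p(\Omega)$ is the decisive ingredient.
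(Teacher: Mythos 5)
Your reduction is exactly the paper's argument: with $\rho=M_0^2/(\mu_{\min}h_0)$ and $\kappa=(\mu_{\min}h_0)^{1/2}$ one gets $\kappa\sqrt{\rho}=M_0$, so condition (1.16) rearranges precisely to $(F_8)$ and Theorem 1.5 applies directly, which is all the paper's Section 6 does (the paper additionally, and redundantly, treats the $(H_3)$ branch that Theorem 1.7 does not assume). Your further sketch of the mechanism behind Theorem 1.5 is a reasonable gloss on what the paper packages into Lemma 2.8 (the Ricceri--Pucci--Serrin alternative), but it is not needed for this statement and does not change the verdict that your approach coincides with the paper's.
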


\begin{rem}
	Theorem 1.7 is a special case of Theorem 1.5. In Theorem 1.7, both condition $\left(F_8\right)$ and $ \mathbb{B}_{M_0^2/\left(\mu_{\min} h_0\right)} $ are novel.
\end{rem}

The paper is organized as follows. In section 2, we give some preliminaries. In section 3, the proof Theorem 1.1 is given. In section 4, we proof Theorem 1.3. The proof of Theorem 1.5 is documented in Section 5, whereas Theorem 1.7 is substantiated in Section 6. In Section 7, We furnish appendices pertinent to the Introduction Section $(1.2)-(1.4)$. In this paper, the letter $C$, $ c_0 $ or $c$ will be used to denote various positive constant.

\section{Preliminaries}

Henceforth, let's assume that $G=(V, E)$ be a locally finite and connected graph with symmetric weight and uniformly positive measure.
In this section, we need the following some important facts.
\begin{lem}$(\cite{8,9})$ 
	If any sequence $\left\{u_n\right\} \subset H_0^1(\Omega)$ with
	$$
	\Phi\left(u_n\right) \text { being bounded, }\ \Phi^{\prime}\left(u_n\right) \rightarrow 0(n \rightarrow \infty),
	$$
	has a convergent subsequence, then $\Phi$ satisfies the $(Palais-Smale)$ condition. If
	$$
	\left|\Phi\left(u_n\right)\right| \leq C \text {, for all } n \text { and some constant } C, \ \Phi^{\prime}\left(u_n\right) \rightarrow 0(n \rightarrow \infty),
	$$
	then $\left\{u_n\right\} \subset H_0^1(\Omega)$ is a $(PS)$ sequence for $\Phi$.
	
\end{lem}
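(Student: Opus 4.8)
The plan is to recognize at the outset that Lemma 2.1 is not an analytic assertion requiring estimates but a bookkeeping statement that fixes the terminology used in the sequel. Its first clause is simply the definition of the Palais--Smale condition: a functional $\Phi$ satisfies $(PS)$ precisely when every sequence $\{u_n\}$ along which $\Phi(u_n)$ remains bounded and $\Phi'(u_n)\to 0$ admits a convergent subsequence. Its second clause merely names any sequence with $|\Phi(u_n)|\le C$ and $\Phi'(u_n)\to 0$ a $(PS)$ sequence for $\Phi$. Accordingly, I would treat the ``proof'' as the verification that the two displayed conditions coincide verbatim with the standard definitions of critical point theory, as recorded in the cited references \cite{8,9}. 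There is no quantity to estimate and no hypothesis to invoke; the only point worth flagging is the typographical mismatch that the ambient space is written $H_0^1(\Omega)$ here, whereas the functional $\Phi$ of (1.11) is defined on the Hilbert space $H$ (equivalently $W_0^{1,2}(\Omega)$ when $h$ is bounded), so one should read $H_0^1(\Omega)$ as a synonym for that space in order that $\Phi$ and $\Phi'$ be well defined.

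Since the statement is definitional, I expect no genuine obstacle within the lemma itself. The real analytic difficulty is precisely what this lemma postpones, namely the separate verification that $\Phi$ \emph{does} satisfy the $(PS)$ condition under the structural hypotheses of the main theorems; that verification, not Lemma 2.1, is where the work lies. For orientation, the expected route there is to start from a bounded-energy, almost-critical sequence, use the Ambrosetti--Rabinowitz condition $(F_4)$ together with $(H_1)$ to bound $\|u_n\|_H$, and then extract a convergent subsequence by exploiting the compactness afforded on a locally finite graph with uniformly positive measure by $(H_2)$. Lemma 2.1, however, requires none of this and is complete once the definitions are matched against the classical ones.
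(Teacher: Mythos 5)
Your reading is correct and matches the paper's own treatment: the paper states Lemma 2.1 with only a citation to \cite{8,9} and supplies no proof, since the two clauses are precisely the standard definitions of the $(PS)$ condition and of a $(PS)$ sequence. Your remark that $H_0^1(\Omega)$ should be read as the space $H$ (equivalently $W_0^{1,2}(\Omega)$) on which $\Phi$ is actually defined is a fair observation, and correctly locates the genuine analytic work in Lemma 3.3 rather than here.
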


We say that $\Phi$ satisfies (Palais-Smale) (for short (PS)) condition if $\Phi$ satisfies $(\mathrm{PS})_c$ at any $c \in \mathbb{R}$. A weaker version of the (PS) condition can be derived from Lemma 2.1 as follows. 

Let $\Phi \in C^1(X, \mathbb{R})$. We say that $\Phi$ satisfies the Cerami condition at the level $c \in \mathbb{R}\left((C)_c\right.$ in short $)$ if any sequence $\left\{u_n\right\} \subset X$ such that
$$
\Phi\left(u_n\right) \rightarrow c, \quad\left(1+\left\|u_n\right\|\right)\left\|\Phi^{\prime}\left(u_n\right)\right\| \rightarrow 0 \text { as } n \rightarrow \infty
$$
has a convergent subsequence.

Additionally, we have the following Lemma.

\begin{lem}$(\cite{11,12,26})$ 
	 Let $(X,\|\cdot\|)$ be a Banach space, $\Phi \in C^1(X, \mathbb{R}),\ e \in X$ and $r>0$ be such that $\|e\|>r$ and
	$$
	b:=\inf_{\|u\|=r} \Phi(u)>\Phi(0) \geq \Phi(e) .
	$$
	If $\Phi$ satisfies the $(P S)_c$ condition with $c:=\inf_{\gamma \in \Gamma} \max_{t \in[0,1]} \Phi(\gamma(t))$, where
	$$
	\Gamma:=\{\gamma \in C([0,1], X): \gamma(0)=0, \gamma(1)=e\},
	$$
	then $c$ is a critical value of $\Phi$.
	\end{lem}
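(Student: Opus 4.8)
The plan is to argue by contradiction via the quantitative deformation lemma, the standard route to the mountain pass theorem. First I would record two elementary facts about the minimax level $c$. On one hand, the straight segment $\gamma_0(t)=te$ lies in $\Gamma$, so $c\le \max_{t\in[0,1]}\Phi(te)<+\infty$ since $\Phi$ is continuous on the compact segment $\{te:t\in[0,1]\}$. On the other hand, every $\gamma\in\Gamma$ satisfies $\|\gamma(0)\|=0<r$ and $\|\gamma(1)\|=\|e\|>r$, so by continuity of $t\mapsto\|\gamma(t)\|$ and the intermediate value theorem there is $t_\gamma$ with $\|\gamma(t_\gamma)\|=r$; hence $\max_t\Phi(\gamma(t))\ge\Phi(\gamma(t_\gamma))\ge b$. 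Taking the infimum over $\Gamma$ gives $c\ge b>\Phi(0)\ge\Phi(e)$. In particular $c$ is a finite real number, strictly larger than the values of $\Phi$ at the two endpoints of every admissible path.

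Next, suppose for contradiction that $c$ is not a critical value, so that $K_c:=\{u\in X:\Phi(u)=c,\ \Phi'(u)=0\}=\emptyset$. Combined with the $(PS)_c$ condition this yields a uniform gradient bound on a strip around level $c$: there exist $\varepsilon_0>0$ and $\delta>0$ with $\|\Phi'(u)\|\ge\delta$ whenever $|\Phi(u)-c|\le\varepsilon_0$. Indeed, were this false I could extract a sequence $u_n$ with $\Phi(u_n)\to c$ and $\Phi'(u_n)\to0$; the $(PS)_c$ condition would then give a subsequence converging to some $u$ with $\Phi(u)=c$ and $\Phi'(u)=0$, contradicting $K_c=\emptyset$. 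Shrinking $\varepsilon_0$ if necessary, I would also arrange $\varepsilon_0<c-\Phi(0)$, which by the first paragraph keeps both endpoints $0$ and $e$ strictly outside the strip $\Phi^{-1}([c-\varepsilon_0,c+\varepsilon_0])$.

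With this in hand I would invoke the deformation lemma: there exist $\varepsilon\in(0,\varepsilon_0)$ and a continuous map $\eta\in C([0,1]\times X,X)$ such that $\eta(1,\cdot)$ fixes every point outside $\Phi^{-1}([c-\varepsilon_0,c+\varepsilon_0])$, the function $t\mapsto\Phi(\eta(t,u))$ is nonincreasing, and $\Phi(\eta(1,u))\le c-\varepsilon$ whenever $\Phi(u)\le c+\varepsilon$. By definition of the infimum I may pick a nearly optimal path $\gamma\in\Gamma$ with $\max_t\Phi(\gamma(t))\le c+\varepsilon$, and set $\tilde\gamma(t):=\eta(1,\gamma(t))$. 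Since $\Phi(0)$ and $\Phi(e)$ both lie below $c-\varepsilon_0$, the endpoints are fixed, so $\tilde\gamma(0)=0$, $\tilde\gamma(1)=e$ and $\tilde\gamma\in\Gamma$; yet the descent property forces $\max_t\Phi(\tilde\gamma(t))\le c-\varepsilon<c$, contradicting $c=\inf_{\gamma\in\Gamma}\max_t\Phi(\gamma(t))$. Therefore $c$ is a critical value of $\Phi$.

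The main obstacle is the deformation lemma itself. Since $X$ is only a Banach space, $\Phi'$ need not be locally Lipschitz, so one cannot flow directly along $-\Phi'$. The standard remedy is to construct a locally Lipschitz pseudo-gradient vector field for $\Phi$ on its regular set, cut it off by a Lipschitz function so that it vanishes away from the strip $\Phi^{-1}([c-\varepsilon_0,c+\varepsilon_0])$, and then integrate the resulting ordinary differential equation to obtain $\eta$. The uniform lower bound $\|\Phi'(u)\|\ge\delta$ on the strip is precisely what guarantees a definite decrease of $\Phi$ along the flow over unit time, producing the gap $\varepsilon$; everything else is bookkeeping to keep the deformed path admissible.
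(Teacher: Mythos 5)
The paper does not prove this lemma at all: it is the classical mountain pass theorem of Ambrosetti--Rabinowitz (in the refined form of Pucci--Serrin), quoted from the cited references \cite{11,12,26}. Your argument --- showing $c\ge b>\Phi(0)\ge\Phi(e)$ via the intermediate value theorem applied to $t\mapsto\|\gamma(t)\|$, extracting a uniform lower bound $\|\Phi'(u)\|\ge\delta$ on a strip around level $c$ from the $(PS)_c$ condition under the assumption $K_c=\emptyset$, and then deforming a near-optimal path below level $c$ with a quantitative deformation lemma built on a locally Lipschitz pseudo-gradient field --- is exactly the standard proof given in those references, and it is correct.
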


It is evident, from the content stated in (1.5) and (1.6) of the Introduction, that the following lemma can be clearly inferred. 
The process of proof  is novel.
\begin{lem}
	The norm $(1.5)$ is equivalent to $(1.6)$.
\end{lem}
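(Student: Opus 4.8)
Write the two quantities as $\|u\|_A^2=\int_{\Omega\cup\partial\Omega}|\nabla u|^2\,d\mu+\int_\Omega u^2\,d\mu$ (the norm in (1.5)) and $\|u\|_B^2=\int_{\Omega\cup\partial\Omega}|\nabla u|^2\,d\mu$ (the norm in (1.6)); the goal is to find $c_1,c_2>0$ with $c_1\|u\|_B\le\|u\|_A\le c_2\|u\|_B$ for all $u\in W_0^{1,2}(\Omega)$. One direction is free: since $\int_\Omega u^2\,d\mu\ge 0$ we have $\|u\|_A^2\ge\|u\|_B^2$, so $c_1=1$ already works. Consequently the entire content of the lemma is the reverse estimate, and the plan is to reduce it to a single Poincar\'e-type inequality
$$\int_\Omega u^2\,d\mu\le C\int_{\Omega\cup\partial\Omega}|\nabla u|^2\,d\mu,$$
valid for all $u$ with $u|_{\partial\Omega}=0$; granting this, $\|u\|_A^2\le(1+C)\|u\|_B^2$ and $c_2=\sqrt{1+C}$ finish the proof.

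To obtain this inequality I would first invoke the definition (1.8) of the first Dirichlet eigenvalue $\lambda_1(\Omega)$. For any admissible $u\neq 0$, (1.8) gives at once $\int_\Omega u^2\,d\mu\le\lambda_1(\Omega)^{-1}\int_{\Omega\cup\partial\Omega}|\nabla u|^2\,d\mu$, so one may take $C=\lambda_1(\Omega)^{-1}$ as soon as $\lambda_1(\Omega)>0$. Thus the whole problem collapses to showing the strict positivity of $\lambda_1(\Omega)$, which I expect to be the main obstacle.

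For that positivity I would exploit the standing hypotheses. Since $G$ is locally finite and $\Omega$ is a bounded domain (its graph distance is uniformly bounded), all vertices of $\Omega$ lie within a fixed number of steps of any chosen base point, and local finiteness then forces $\Omega$ to be a finite set. On the finite-dimensional space of functions vanishing on $\partial\Omega$ the Rayleigh quotient in (1.8) is continuous and the unit sphere is compact, so the infimum is attained at some $u_\ast\neq 0$. Were $\lambda_1(\Omega)=0$, then $|\nabla u_\ast|(x)=0$ for every $x$, i.e. $u_\ast(y)=u_\ast(x)$ whenever $y\sim x$; connectedness together with $u_\ast|_{\partial\Omega}=0$ (and $\partial\Omega\neq\varnothing$) would propagate the value $0$ throughout $\Omega$, contradicting $u_\ast\neq 0$. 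Hence $\lambda_1(\Omega)>0$.

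If one prefers a self-contained estimate with an explicit constant --- which I suspect is the intended ``novel'' route --- the Poincar\'e inequality can instead be built directly by telescoping to the boundary. For each $x\in\Omega$ choose a shortest path $x=x_0\sim x_1\sim\cdots\sim x_m=z\in\partial\Omega$; since $u(z)=0$, writing $u(x)=\sum_{k=0}^{m-1}\bigl(u(x_k)-u(x_{k+1})\bigr)$ and applying Cauchy--Schwarz gives $u(x)^2\le m\sum_{k=0}^{m-1}\bigl(u(x_k)-u(x_{k+1})\bigr)^2$. Each increment is controlled by the gradient through $\mu_{x_k x_{k+1}}\bigl(u(x_k)-u(x_{k+1})\bigr)^2\le 2\mu(x_k)|\nabla u|^2(x_k)$, and summing against $\mu(x)$ while using the diameter bound on $m$, the uniform lower bound $\mu(x)\ge\mu_{\min}$, and the positivity of the finitely many edge weights on $\Omega$ yields an admissible $C$. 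In either presentation the equivalence of the norms follows immediately once the Poincar\'e constant is secured.
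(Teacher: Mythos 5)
Your proof is correct and follows essentially the same route as the paper: the nontrivial direction is the Poincar\'e inequality $\int_\Omega u^2\,d\mu\le \lambda_1(\Omega)^{-1}\int_{\Omega\cup\partial\Omega}|\nabla u|^2\,d\mu$ read off directly from the variational characterization (1.8), and the reverse direction is immediate since $\int_\Omega u^2\,d\mu\ge 0$. The only place you go beyond the paper is in justifying $\lambda_1(\Omega)>0$ (via finiteness of a bounded domain in a locally finite graph and connectedness, or alternatively via the telescoping estimate to the boundary); the paper divides by $\lambda_1(\Omega)$ without comment, so your extra argument closes a small gap in its proof rather than taking a different route.
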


\begin{proof}
	By (1.8), we have
	$$
	\lambda_1(\Omega) \int_{\Omega} u^2 d \mu \leq \int_{\Omega \cup \partial \Omega}|\nabla u|^2 d \mu .
	$$
	Hence
	$$
	\begin{aligned}
		\|u\|_{W_0^{1,2}(\Omega)}^2 & =\int_{\Omega \cup \partial \Omega}|\nabla u|^2 d \mu+\int_{\Omega} u^2 d \mu \\
		& \leq \int_{\Omega \cup \partial \Omega}|\nabla u|^2 d \mu+\frac{1}{\lambda_1(\Omega)} \int_{\Omega \cup \partial \Omega}|\nabla u|^2 d \mu \\
		& =\left(1+\frac{1}{\lambda_1(\Omega)}\right) \int_{\Omega \cup \partial \Omega}|\nabla u|^2 d \mu .
	\end{aligned}
	$$
	On the other hand,
	$$
	\begin{aligned}
		\|u\|_{W_0^{1,2}(\Omega)}^2& =\int_{\Omega \cup \partial \Omega}|\nabla u|^2 d \mu \\
		& =\sum_{x \in \Omega \cup \partial \Omega} \mu(x)|\nabla u|^2(x) \\
		& \leq \sum_{x \in \Omega \cup \partial \Omega} \mu(x)\left(|\nabla u|^2(x)+u^2(x)\right) \\
		& =\int_{\Omega \cup \partial \Omega}|\nabla u|^2 d \mu+\int_{\Omega \cup \partial \Omega} u^2 d \mu \\
		& =\int_{\Omega \cup \partial \Omega}|\nabla u|^2 d \mu+\int_{\Omega} u^2 d \mu .
	\end{aligned}
	$$
	In conclusion, the validation of this lemma has been established.
\end{proof}

We next delve into the integration by parts formulas on graphs, which are crucial when employing methods in the calculus of variations. The proofs for the subsequent one lemma can be located in \cite{18}, and hence, are not reiterated here.

\begin{lem}
	Suppose that $u \in W^{1,2}(V)$ and $\Omega \subset V$ is a bounded connected domain. Then for any $v \in C_c(\Omega)$, we have
	$$
	\int_{\Omega \cup \partial \Omega} \nabla u \nabla v d \mu=\int_{\Omega \cup \partial \Omega} \Gamma(u, v) d \mu=-\int_{\Omega}(\Delta u) v d \mu .
	$$
\end{lem}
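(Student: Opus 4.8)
The first equality is purely notational: $\nabla u\,\nabla v$ is by definition the bilinear gradient form $\Gamma(u,v)$ of $(1.4)$, so all the content sits in the second equality. The plan is to establish it by a discrete summation-by-parts identity driven by the symmetry $\mu_{xy}=\mu_{yx}$, using the compact support of $v$ both to make every sum finite and to cancel the boundary contributions.

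First I would exploit that $v\in C_c(\Omega)$ vanishes off a finite set $K\subset\Omega$; in particular $v\equiv0$ on $\partial\Omega$ and off $\Omega\cup\partial\Omega$. A short check shows $\Gamma(u,v)(x)=0$ whenever $x\notin\Omega\cup\partial\Omega$: there $v(x)=0$, and no neighbour $y\sim x$ can lie in $K\subset\Omega$, for otherwise $x$ would be adjacent to $\Omega$ without belonging to it and hence would lie in $\partial\Omega$; thus every difference $v(y)-v(x)$ vanishes. This lets me enlarge the domain of summation to all of $V$ at no cost,
$$
\int_{\Omega\cup\partial\Omega}\Gamma(u,v)\,d\mu=\frac12\sum_{x\in V}\sum_{y\sim x}\mu_{xy}\bigl(u(y)-u(x)\bigr)\bigl(v(y)-v(x)\bigr),
$$
the sum being finite since only the finitely many edges meeting $K$ contribute and $G$ is locally finite.

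Next I would split the product as $\bigl(u(y)-u(x)\bigr)\bigl(v(y)-v(x)\bigr)=\bigl(u(y)-u(x)\bigr)v(y)-\bigl(u(y)-u(x)\bigr)v(x)$ and relabel the dummy indices $x\leftrightarrow y$ in the first resulting double sum. Because $x\sim y\iff y\sim x$ and $\mu_{xy}=\mu_{yx}$, this relabelling converts the first sum into $-\tfrac12\sum_{x}\sum_{y\sim x}\mu_{xy}(u(y)-u(x))v(x)$, i.e.\ exactly the second sum, so the two add up to
$$
\int_{\Omega\cup\partial\Omega}\Gamma(u,v)\,d\mu=-\sum_{x\in V}v(x)\sum_{y\sim x}\mu_{xy}\bigl(u(y)-u(x)\bigr)=-\sum_{x\in V}v(x)\,\mu(x)\,\Delta u(x),
$$
the last equality being the definition $(1.3)$ of $\Delta u$. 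Since $v$ is supported in $K\subset\Omega$, the sum over $V$ collapses to one over $\Omega$, which by the definition of the integral is precisely $-\int_{\Omega}(\Delta u)v\,d\mu$, as claimed.

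The hard part is not any single computation but the bookkeeping at the boundary: one must be sure that passing to the sum over all of $V$ is harmless and that the index interchange neither creates nor loses edges straddling $\partial\Omega$. Both concerns are resolved by the compact-support remark, which localises every nonzero term to the finite edge-neighbourhood of $K$; the hypothesis $u\in W^{1,2}(V)$ together with local finiteness then ensures that each $\Delta u(x)$ appearing is a finite, well-defined number, so no convergence issue intervenes.
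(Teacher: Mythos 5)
Your argument is correct and complete: the reduction to a sum over all of $V$ via the compact support of $v$ and the definition of $\partial\Omega$, followed by the index swap using $\mu_{xy}=\mu_{yx}$, is exactly the standard discrete summation-by-parts (Green's formula) proof. The paper itself does not prove this lemma --- it defers to its reference [18] --- and the proof given there is this same computation, so there is nothing to reconcile; your bookkeeping at the boundary and the observation that local finiteness plus the finite support of $v$ makes every sum finite are precisely the points that need checking.
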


Now we can define the weak solution  of the equation (1.7) as

\begin{defn}
    Suppose $ u \in H $. If for any $ \varphi \in H $, there holds
	$$
	\int_{\Omega \cup \partial \Omega}\Gamma(u, \varphi) d \mu
	+\int_{\Omega}h u \varphi d \mu=\int_{\Omega} f(x, u) \varphi d \mu, 
	$$
	then $u$ is called a weak solution of (1.7).
\end{defn}

\begin{prop}
	If $u \in H$ is a weak solution of $(1.7)$, then $u$ is also a point-wise solution of $(1.7)$.
\end{prop}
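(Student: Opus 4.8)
The plan is to exploit the discreteness of the graph: on a graph, point evaluation at a vertex is realized by a genuine test function, so the point-wise equation will follow by testing the weak formulation of Definition 2.5 against Dirac masses. First I would rewrite the weak identity using the integration-by-parts formula of Lemma 2.4. Since $\Omega$ is a bounded domain and $u \in H \subset W_0^{1,2}(\Omega)$ vanishes on $\partial\Omega$, extending $u$ by zero outside $\Omega \cup \partial\Omega$ makes it an admissible element of $W^{1,2}(V)$; then for every $\varphi \in C_c(\Omega)$ Lemma 2.4 yields $\int_{\Omega\cup\partial\Omega}\Gamma(u,\varphi)\,d\mu = -\int_{\Omega}(\Delta u)\varphi\,d\mu$. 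Substituting this into the weak formulation converts it into
$$
\int_{\Omega}\bigl(-\Delta u + h u - f(x,u)\bigr)\varphi\,d\mu = 0 \qquad \text{for all } \varphi \in C_c(\Omega).
$$

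Next, for each fixed vertex $x_0 \in \Omega$ I would take the test function $\varphi = \delta_{x_0}$, equal to $1$ at $x_0$ and $0$ at every other vertex. Because its support is the single point $x_0$, it lies in $C_c(\Omega) \subset H$ and every integral reduces to a finite sum, so it is admissible. With this choice the integral collapses to one term and
$$
\mu(x_0)\bigl(-\Delta u(x_0) + h(x_0)u(x_0) - f(x_0,u(x_0))\bigr) = 0.
$$
Since the measure is uniformly positive, $\mu(x_0) \geq \mu_{\min} > 0$, and I may divide by $\mu(x_0)$ to obtain $-\Delta u(x_0) + h(x_0)u(x_0) = f(x_0,u(x_0))$. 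As $x_0 \in \Omega$ is arbitrary, this is precisely the interior equation of (1.7) at every vertex of $\Omega$; observe that any values $u(y)$ with $y \in \partial\Omega$ occurring inside $\Delta u(x_0)$ are already zero. Finally, the boundary condition $u = 0$ on $\partial\Omega$ is built into the membership $u \in H \subset W_0^{1,2}(\Omega)$, so it holds automatically and no separate argument is needed.

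The only step requiring genuine care is the justification that $\delta_{x_0}$ is admissible and that Lemma 2.4 applies to $u$: one must verify that $u$, after extension by zero, indeed belongs to $W^{1,2}(V)$, which relies on the boundedness of $\Omega$ together with the vanishing of $u$ on $\partial\Omega$. Once these membership facts are secured the remainder of the argument is purely algebraic, in sharp contrast with the Euclidean setting, where passing from a weak to a point-wise solution requires the fundamental lemma of the calculus of variations and elliptic regularity; here the discreteness of $V$ makes the localized test function $\delta_{x_0}$ directly available.
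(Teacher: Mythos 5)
Your proof is correct and follows essentially the same route as the paper: apply the integration-by-parts formula of Lemma 2.4 to pass to $\int_{\Omega}(-\Delta u+hu-f(x,u))\varphi\,d\mu=0$, then test against a function supported at the single vertex $x_0$ and divide by $\mu(x_0)>0$. The paper's test function $\varphi(x)=\eta(x-x_0)/\sum_{x\in\Omega}\eta(x-x_0)$ is exactly your normalized Dirac mass $\delta_{x_0}$, so the two arguments coincide; your version is if anything slightly more explicit about the division by $\mu(x_0)$ and the boundary condition.
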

\begin{proof}
	If $u$ is a weak solution, then by Lemma 2.4, we have 
	\begin{equation}
	  \int_{\Omega}(-\Delta u+h u) \varphi d \mu=\int_{\Omega} f(x, u) \varphi d \mu, \quad \forall \varphi \in C_c(\Omega) .
	\end{equation}
	For any fixed $x_0 \in \Omega$, taking a test function $\varphi: \Omega \rightarrow \mathbb{R}$ in (2.1) with
	$$
	\varphi(x)=\frac{\eta\left(x-x_0\right)}{\sum_{x \in \Omega} \eta\left(x-x_0\right)}, 
	$$
	where
		$$
	 \eta(x)=\left\{\begin{array}{cc}
		e^{-1}, & x=0, \\
		0, & x \neq 0.
	\end{array}\right.
	$$
	We have
	$$
	-\Delta u(x_0)+h(x_0) u(x_0)-f(x_0, u(x_0))=0 .
	$$
	Since $x_0$ is arbitrary, we conclude that $u$ is a point-wise solution of $(1.7)$.
\end{proof}

It merits noting that there is no uniqueness in the choice of test function, with further details present in Reference \cite{2,19}. Proposition 2.6 choose a new test function.

Ultimately within this section, we arrive at the discussion regarding the Sobolev embedding theorems on the graph. The next lemma is similar to the results in \cite{2,18,19}.

\begin{lem}
	Suppose that $G=(V, E)$ be a locally finite and connected graph with symmetric weight and uniformly positive measure. Then $H$ is continuously embedded into $L^q(\Omega)$ for any $q \in[1,+\infty]$. Namely, there exists a constant $C$ depending only on $\mu(x), \mu_{\min}, q, \Omega$ and $ h_0 $ such that
	$$
	\|u\|_{L^q(\Omega)} \leq C\|u\|_H.
	$$
	Moreover, for any bounded sequence $\left\{u_n\right\} \subset H$, there exists $u \in H$ such that, up to a subsequence
	$$
	\left\{\begin{array}{l}
		u_n \rightarrow u, \quad \text { in } H, \\
		u_n(x) \rightarrow u(x), \quad \forall x \in \Omega, \\
		u_n \rightarrow u,\quad \text { in } L^q(\Omega).
	\end{array}\right.
	$$
\end{lem}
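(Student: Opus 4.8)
The plan is to handle the continuous embedding and the sequential compactness separately, isolating the distinct roles of the two structural hypotheses: $(H_1)$ together with $\mu\ge\mu_{\min}$ produces an $L^\infty$ bound, while $(H_2)$ supplies the tail decay needed both to reach the exponents $q<2$ and to upgrade pointwise convergence to $L^q$ convergence.

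First I would record two elementary pointwise estimates. For any fixed $x\in\Omega$, monotonicity of the measure gives $\mu_{\min}|u(x)|^2\le\mu(x)|u(x)|^2\le\int_\Omega u^2\,d\mu$, and $(H_1)$ gives $h_0\int_\Omega u^2\,d\mu\le\int_\Omega h u^2\,d\mu\le\|u\|_H^2$. Chaining these yields $\|u\|_{L^\infty(\Omega)}\le(\mu_{\min}h_0)^{-1/2}\|u\|_H$ and $\|u\|_{L^2(\Omega)}\le h_0^{-1/2}\|u\|_H$, settling $q=2$ and $q=\infty$. For $2<q<\infty$ I interpolate, $\|u\|_{L^q}^q=\sum_x\mu(x)|u(x)|^q\le\|u\|_{L^\infty}^{q-2}\|u\|_{L^2}^2$, so the bound follows from the two endpoints. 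The only place $(H_2)$ enters the embedding is the range $1\le q<2$: there I apply H\"older with exponents $2/q$ and $2/(2-q)$ to the splitting $\mu|u|^q=(\mu h u^2)^{q/2}(\mu\, h^{-q/(2-q)})^{(2-q)/2}$, and since $s:=q/(2-q)\ge1$ and $h\ge h_0$ give $h^{-s}\le h_0^{-(s-1)}h^{-1}$, the factor $\sum_x\mu(x)h(x)^{-s}$ is controlled by $h_0^{-(s-1)}\|1/h\|_{L^1(\Omega)}<\infty$. This gives $\|u\|_{L^q}\le C\|u\|_H$ with $C$ depending on the stated data.

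For the sequential statement I would use that a locally finite graph has a countable vertex set, so $\Omega$ is countable; as $\{u_n\}$ is bounded in $H$, the $L^\infty$ bound makes $\{u_n(x)\}$ bounded in $\mathbb{R}$ for each $x$, and a diagonal extraction produces a subsequence with $u_n(x)\to u(x)$ for every $x\in\Omega$. Fatou's lemma applied term-by-term to the (locally finite) sums defining $\|\cdot\|_H$ gives $u\in H$, and $\{v_n\}$ with $v_n:=u_n-u$ is bounded in $H$. To get strong $L^2$ convergence I split $\Omega$ into a finite set $A$ and its complement: on $A$ the finite sum $\sum_{x\in A}\mu(x)v_n(x)^2\to0$ by pointwise convergence, while on the tail I estimate $\mu v_n^2\le\|v_n\|_{L^\infty}\,\mu|v_n|$ and then, by Cauchy--Schwarz, $\sum_{x\notin A}\mu(x)|v_n(x)|\le(\sum_{x\notin A}\mu/h)^{1/2}(\sum_{x\notin A}\mu h v_n^2)^{1/2}$, the last factor being bounded by $\|v_n\|_H$; since $(H_2)$ makes $\sum_{x\notin A}\mu/h$ as small as desired for $A$ large, the tail is uniformly small in $n$. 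Hence $\|v_n\|_{L^2}\to0$, whence $L^q$ convergence for $q>2$ follows by the same interpolation, for $1\le q<2$ by the weighted H\"older tail estimate, and for $q=\infty$ from $\mu_{\min}|v_n(x)|^2\le\|v_n\|_{L^2}^2$.

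The step I expect to be the crux is the convergence in $H$ itself, namely control of $\int_\Omega h v_n^2\,d\mu$ and $\int_{\Omega\cup\partial\Omega}|\nabla v_n|^2\,d\mu$. Boundedness in $H$ does not by itself force these to vanish, since mass can escape through vertices where $h$ is large, so from boundedness alone one obtains only the weak convergence $u_n\rightharpoonup u$ in the Hilbert space $H$, the weak limit being identified with the pointwise limit via the $L^2$ convergence just established. I would therefore read the first line of the conclusion as weak convergence in $H$, which is exactly what the diagonal/Fatou/$L^q$ machinery delivers; the strong convergence $\|u_n-u\|_H\to0$ of a genuinely relevant sequence is recovered in the applications, where the additional information $\Phi'(u_n)\to0$ lets one expand $\langle\Phi'(u_n)-\Phi'(u),u_n-u\rangle\to0$ and use the strong $L^q$ convergence of the nonlinear term. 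This weak-versus-strong distinction is the only delicate point; the remaining estimates are routine given $(H_1)$, $(H_2)$, and $\mu\ge\mu_{\min}$.
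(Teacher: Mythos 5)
Your proposal is correct, and its skeleton coincides with the paper's: the pointwise bound $|u(x)|^2\le(\mu_{\min}h_0)^{-1}\|u\|_H^2$ coming from $\mu\ge\mu_{\min}$ and $(H_1)$, extraction of a pointwise-convergent subsequence, and an upgrade from pointwise to $L^q$ convergence via a finite-set/tail splitting. The differences are in the two technically delicate spots, and in both your version is the more robust one. For the embedding with $1\le q<2$ the paper simply factors out $\bigl(\sum_{x\in\Omega}\mu(x)\bigr)^{1/q}$, i.e.\ it quietly assumes $\mu(\Omega)<\infty$ (automatic only if $\Omega$ is a bounded, hence finite, domain); your weighted H\"older argument with $h^{-q/(2-q)}\le h_0^{-(s-1)}h^{-1}$ replaces that by the hypothesis $(H_2)$, which is actually listed in the theorems but never used in the paper's proof of this lemma. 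Likewise, for the tail of $\int_\Omega|u_n-u|\,d\mu$ the paper asserts the inequality (2.2), $\int_{\mathrm{dist}(x,x_0)>R}|u_n-u|\,d\mu\le\varepsilon\int_\Omega|u_n-u|\,d\mu$, without justification (and with the roles of the near and far sets apparently swapped), whereas your Cauchy--Schwarz bound $\sum_{x\notin A}\mu|v_n|\le(\sum_{x\notin A}\mu/h)^{1/2}\|v_n\|_H$ derives the required uniform smallness honestly from $(H_2)$. You obtain pointwise convergence by a diagonal argument over the countable vertex set rather than, as the paper does, by testing the weak limit against delta functions; both work. Finally, your reading of the first line of the conclusion as weak convergence in $H$ is exactly what the paper's own argument (reflexivity of $H$) delivers, and your remark that strong $H$-convergence cannot follow from boundedness alone, but is recovered later using $\Phi'(u_n)\to0$, is a correct clarification of a point the paper leaves ambiguous.
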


\begin{proof}
	For any $u \in H$ and vertex $x_0 \in \Omega$, we have
	$$
	\begin{aligned}
		\|u\|_H^2 & =\int_{\Omega \cup \partial \Omega}|\nabla u|^2 d \mu+\int_{\Omega} h u^2 d \mu \\
		& \geq \int_{\Omega} h u^2 d u=\sum_{x \in \Omega} h(x) u^2(x) \mu(x) \\
		& \geq \mu_{\min } h_0 u^2\left(x_0\right),
	\end{aligned}
	$$
	thereby obtaining
	$$
	u\left(x_0\right) \leq \left(\frac{1}{\mu_{\min} h_0}\right)^{\frac{1}{2}}\|u\|_H.
	$$
	Thus
	$$
	\|u\|_{L^{\infty}(\Omega)} \leq \left(\frac{1}{\mu_{\min} h_0}\right)^{\frac{1}{2}}\|u\|_H .
	$$
	Thus for any $1 \leq q <+\infty$, we have
	$$
	\begin{aligned}
		\|u\|_{L^q(\Omega)} & =\left(\sum_{x \in \Omega} \mu(x)|u(x)|^q\right)^{\frac{1}{q}} \\
		& \leq\left(\sum_{x \in \Omega} \mu(x)\right)^{\frac{1}{q}}\left(\frac{1}{\mu_{\min} h_0}\right)^{\frac{1}{2}}\|u\|_H:=C\|u\|_H.
	\end{aligned}
	$$
	
	Given that the Hilbert space $H$ is reflexive, it follows that for any bounded sequence $\left\{u_k\right\}$ in $H$, there exists a subsequence such that 
    $u_n \rightarrow u$ in $H$. In particular,
	$$
	\lim_{n \rightarrow \infty} \int_{\Omega} u_n \varphi d u=\int_{\Omega} u \varphi d u, \quad \forall \varphi \in C_c(\Omega),
	$$
	that is, 
	$$
	\lim_{n \rightarrow \infty} \int_{\Omega}\left(u_n-u\right) \varphi d \mu=\lim_{n \rightarrow \infty} \sum_{x \in \Omega} \mu(x)\left(u_n(x)-u(x)\right) \varphi(x)=0 .
	$$
	Let
	$$
	\varphi(x)=\left\{\begin{array}{cc}
		u_n\left(x_0\right)-u\left(x_0\right), & x=x_0, \\
		0, & x \neq x_0,
	\end{array}\right.
	$$
	thus
	$$
	\lim_{n \rightarrow \infty} \mu\left(x_0\right)\left(u_n\left(x_0\right)-u\left(x_0\right)\right)=0,
	$$
	which implies that $\lim_{n \rightarrow \infty} u_n(x)=u(x)$ for any $x \in \Omega$.
	
	Now, we commence the proof showing that $u_n \rightarrow u$ in $L^q(\Omega)$ for all $1 \leq q \leq \infty$. 
	
	  Since $\left\{u_n\right\}$ is bounded in $H$ and $u \in H$, we can deduce that
	$$
	\int_{\Omega}\left|u_n-u\right| d \mu \leq C_1.
	$$
	on the one hand, for any $\varepsilon>0$,
	\begin{equation}
	   \int_{\operatorname{dist}\left(x, x_0\right)>R}\left|u_n-u\right| d u \leq \varepsilon \int_{\Omega}\left|u_n-u\right| d u \leq  C_1\varepsilon.
	\end{equation}
	On the other hand, since $\left\{x \in \Omega: \operatorname{dist}\left(x, x_0\right)>R\right\}$ is a finite set and $u_n(x) \rightarrow u(x)$ for any $x \in \Omega$ as $n \rightarrow \infty$, we have
	\begin{equation}
		\lim_{n \rightarrow+\infty} \int_{\operatorname{dist}\left(x, x_0\right) \leq R}\left|u_n-u\right| d \mu=0.
	\end{equation}
	Combining (2.2) and (2.3), we derive that
	$$
	\lim_{n \rightarrow \infty} \int_{\Omega}\left|u_n-u\right| d \mu=0 .
	$$
	In particular, it holds that up to a subsequence, $u_n \rightarrow u$ in $L^{q}(\Omega)$. Since $\mu(x) \geq \mu_{\min }>0$, we have
	$$
	\begin{aligned}
		h \int_{\Omega}\left|u_n-u\right| d u & =\sum_{x \in \Omega} \mu(x)h(x)\left|u_n(x)-u(x)\right| \\
		& \geq \mu_{\min} h_0\left|u_n(x)-u(x)\right|,
	\end{aligned}
	$$
	thus
	$$
	\left|u_n(x)-u(x)\right| \leq \frac{h}{\mu_{\min} h_0} \int_{\Omega}\left|u_n-u\right| d \mu,
	$$
	Hence
	$$
	\left\|u_n-u\right\|_{L^{\infty}(\Omega)} \leq \frac{h}{\mu_{\min} h_0} \int_{\Omega}\left|u_n-u\right| d \mu .
	$$
	We have for any $1<q<+\infty$,
	$$
	\begin{aligned}
		\int_{\Omega}\left|u_n-u\right|^q d \mu =& \int_{\Omega}\left|u_n-u\right|^{q-1} \cdot\left|u_n-u\right| d \mu \\
		\leq & \left(\frac{h}{\mu_{\min} h_0}\right)^{q-1}\left(\int_{\Omega}\left|u_n-u\right| d \mu\right)^{q-1} \int_{\Omega}\left|u_n-u\right| d \mu \\
		= & \left(\frac{h}{\mu_{\min} h_0}\right)^{q-1}\left(\int_{\Omega}\left|u_n-u\right| d \mu\right)^q .
	\end{aligned}
	$$
	Therefore, up to a subsequence, $u_n \rightarrow u$ in $L^q(\Omega)$ for all $1 \leq q \leq+\infty$. The proof is completed.

\end{proof}

Finally we have the following Lemma 2.8 (refer to \cite{25}). Lemma 2.8 is derived through the conjoint implementation of the classical Pucci-Serrin Theorem (refer to \cite{26}) and a result pertaining to local minimum credited to Ricceri (refer to \cite{27}). The particulars are delineated as follows.

\begin{lem}
	Let $X$ be a reflexive real Banach space and let $\Theta, \Psi: X \rightarrow \mathbb{R}$ be two continuously Gâteaux differentiable functionals such that
	
	- $\Theta$ is sequentially weakly lower semicontinuous and coercive in $X$.
	
	- $\Psi$ is sequentially weakly continuous in $X$.
	
	In addition, assume that for each $\zeta>0$ the functional $\Phi_\zeta:=\zeta \Theta-\Psi$ satisfies the $(PS)$ condition. Then, for each $\rho>\inf_X \Theta$ and each
	$$
	\zeta>\inf_{u \in \Theta^{-1}((-\infty, \rho))} \frac{\left(\sup _{v \in \Phi^{-1}((-\infty, \rho))} \Psi(v)\right)-\Psi(u)}{\rho-\Theta(u)}
	$$
	the following alternative holds: either the functional $\Phi_\zeta$ has a strict global minimum which lies in $\Theta^{-1}((-\infty, \rho))$, or $\Phi_\zeta$ has at least two critical points one of which lies in $\Theta^{-1}((-\infty, \rho))$.
\end{lem}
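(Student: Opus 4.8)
The plan is to realize a point $\hat u$ as a local minimizer of $\Phi_\zeta := \zeta\Theta - \Psi$ that sits strictly inside the sublevel set $W_\rho := \Theta^{-1}((-\infty,\rho))$ (a Ricceri-type local minimum argument), and then to feed the resulting geometry into the classical Pucci--Serrin critical point theorem. Throughout I read the supremum appearing in the hypothesis as taken over $W_\rho = \Theta^{-1}((-\infty,\rho))$. First I would fix $\rho > \inf_X\Theta$ and $\zeta$ above the stated infimum, and extract from the definition of that infimum a point $\bar u \in W_\rho$ with $\zeta(\rho - \Theta(\bar u)) > \sup_{W_\rho}\Psi - \Psi(\bar u)$, noting $\rho - \Theta(\bar u) > 0$.

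Next, using that $\Theta$ is coercive and sequentially weakly lower semicontinuous and that $X$ is reflexive, the sublevel set $S := \{u : \Theta(u) \le \rho\}$ is bounded and sequentially weakly closed, hence weakly compact by Eberlein--\v{S}mulian. On $S$ the functional $\Phi_\zeta = \zeta\Theta - \Psi$ is sequentially weakly lower semicontinuous, being the sum of the weakly lsc map $\zeta\Theta$ and the weakly continuous map $-\Psi$, so it attains its minimum over $S$ at some $\hat u \in S$.

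The key step is to show $\hat u$ is interior, i.e. $\Theta(\hat u) < \rho$. I would argue by contradiction: if $\Theta(\hat u) = \rho$, then $\Phi_\zeta(\hat u) = \zeta\rho - \Psi(\hat u) \ge \zeta\rho - \sup_{W_\rho}\Psi$, whereas the choice of $\bar u$ forces $\Phi_\zeta(\bar u) = \zeta\Theta(\bar u) - \Psi(\bar u) < \zeta\rho - \sup_{W_\rho}\Psi$, contradicting minimality of $\hat u$; here the matching $\sup_S\Psi = \sup_{W_\rho}\Psi$ should follow from weak continuity of $\Psi$ together with $S = \overline{W_\rho}^{\,w}$. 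Since $W_\rho$ is weakly open it is norm-open, so $\hat u$ minimizes $\Phi_\zeta$ on a whole norm-neighborhood, and continuous G\^ateaux differentiability yields $\Phi_\zeta'(\hat u) = 0$, a critical point lying in $\Theta^{-1}((-\infty,\rho))$.

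Finally I would split into the two alternatives. If $\hat u$ is a strict global minimum of $\Phi_\zeta$ on $X$, the first alternative holds. Otherwise $\hat u$ is a local minimum that is not a strict global minimum, so there is $u_1 \ne \hat u$ with $\Phi_\zeta(u_1) \le \Phi_\zeta(\hat u)$; invoking the Pucci--Serrin theorem under the assumed $(PS)$ condition for $\Phi_\zeta$ produces a second critical point distinct from $\hat u$, giving the second alternative with one critical point in $\Theta^{-1}((-\infty,\rho))$. The main obstacle is the interior-point step: establishing the strict inequality $\Theta(\hat u) < \rho$ rests on the comparison above together with the identification of $S$ with the weak closure of $W_\rho$ and the consequent equality of the two suprema; a secondary delicate point is invoking Pucci--Serrin correctly in the non-strict case, separating the occurrence of a second minimizer from a genuine mountain-pass critical point while using only the given $(PS)$ hypothesis.
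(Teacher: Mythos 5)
The paper never actually proves this statement: Lemma 2.8 is quoted from Ricceri's work (references [25] and [27]) together with the Pucci--Serrin mountain pass theorem [26], and the text only records that it is obtained by combining a Ricceri-type local-minimum principle with Pucci--Serrin. Your two-stage architecture --- produce a local minimizer of $\Phi_\zeta$ inside $\Theta^{-1}((-\infty,\rho))$, then split on whether it is a strict global minimum and otherwise invoke the zero-altitude Pucci--Serrin theorem --- is exactly the intended route, and your handling of the second stage (including the need to treat the non-strict case via a critical point on a small sphere around $\hat u$) is sound in outline.

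However, the step you yourself flag as the main obstacle is a genuine gap as written. You minimize $\Phi_\zeta$ over the closed sublevel set $S=\{\Theta\le\rho\}$ and then need $\sup_S\Psi=\sup_{W_\rho}\Psi$, which you justify by asserting $S=\overline{W_\rho}^{\,w}$. That identity is false in general: if $\Theta$ has a strict local minimum at some point $u_0$ with $\Theta(u_0)$ exactly equal to $\rho$, then $u_0\in S$ but no neighbourhood of $u_0$ meets $W_\rho$, so $u_0$ is not in any closure of $W_\rho$; choosing $\Psi$ large at $u_0$ makes $\sup_S\Psi>\sup_{W_\rho}\Psi$ and can force the minimizer of $\Phi_\zeta$ over $S$ onto such a boundary point, where it need not be a critical point (it is not interior to $S$) and your contradiction does not close. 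The correct argument --- Ricceri's --- avoids $S$ entirely: take a minimizing sequence $\{u_n\}\subset W_\rho$ for $\inf_{W_\rho}\Phi_\zeta$, extract a weak limit $\hat u$ (coercivity plus reflexivity), and observe that $\Psi(\hat u)=\lim_n\Psi(u_n)\le\sup_{W_\rho}\Psi$ holds automatically because $\Psi$ is sequentially weakly continuous \emph{along a sequence lying in $W_\rho$}; then $\Theta(\hat u)=\rho$ would give $\Phi_\zeta(\hat u)\ge\zeta\rho-\sup_{W_\rho}\Psi>\Phi_\zeta(\bar u)\ge\inf_{W_\rho}\Phi_\zeta\ge\Phi_\zeta(\hat u)$, a contradiction, so $\hat u\in W_\rho$ and attains $\inf_{W_\rho}\Phi_\zeta$. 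A minor additional slip: $W_\rho$ is norm-open because continuous G\^ateaux differentiability makes $\Theta$ norm-continuous, not because $W_\rho$ is weakly open --- in general it is not, since $\Theta$ is only weakly lower semicontinuous.
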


\section{Proof of Theorem 1.1}

In this section, we establish Theorem 1.1, leveraging Lemma 2.1 and Lemma 2.2. We initially present the following.

  \begin{lem}
  	There exist $ \delta, r >0 $ such that $ \Phi\left(u\right)\geq \delta $ when $ \|u\|_H=r $.
  	
  \end{lem}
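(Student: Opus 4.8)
The plan is to read off from the definition $(1.11)$ that $\Phi(u) = \tfrac12\|u\|_H^2 - \int_\Omega F(x,u)\,d\mu$, and then to show that on a small sphere the quadratic part dominates the nonlinear part. The whole argument rests on proving that $\int_\Omega F(x,u)\,d\mu = o(\|u\|_H^2)$ as $\|u\|_H \to 0$, after which positivity on $\{\|u\|_H = r\}$ for small $r$ follows at once.

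First I would establish the growth estimate: for every $\varepsilon > 0$ there is a constant $C_\varepsilon > 0$ with
$$|F(x,u)| \le \tfrac{\varepsilon}{2}|u|^2 + C_\varepsilon |u|^p, \qquad x \in \Omega,\ u \in \mathbb{R}.$$
For large $|u|$ this follows by integrating $(F_3)$. For small $|u|$ it comes from $(F_2)$: since $f\in C^1$ by $(F_1)$ with $f(x,0)=f_u(x,0)=0$, one has $|f(x,u)| \le \varepsilon|u|$ on a neighbourhood of $u=0$, whence $|F(x,u)| \le \tfrac{\varepsilon}{2}|u|^2$ there, and patching the two regimes with $(F_3)$ absorbs the intermediate range into the $|u|^p$ term. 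Here the $L^\infty$-embedding of Lemma 2.7, namely $\|u\|_{L^\infty(\Omega)} \le (\mu_{\min}h_0)^{-1/2}\|u\|_H$, is convenient: once $\|u\|_H \le r$ is small, every value $u(x)$ lies in a fixed small interval about $0$, so only the near-origin behaviour of $f$ is relevant.

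Next I would feed this into $\Phi$ and invoke the continuous embeddings $H \hookrightarrow L^2(\Omega)$ and $H \hookrightarrow L^p(\Omega)$ from Lemma 2.7, giving constants $C_2, C_p$ with $\int_\Omega u^2\,d\mu \le C_2\|u\|_H^2$ and $\int_\Omega |u|^p\,d\mu \le C_p\|u\|_H^p$. Combining,
$$\Phi(u) \ge \tfrac12\|u\|_H^2 - \tfrac{\varepsilon}{2}C_2\|u\|_H^2 - C_\varepsilon C_p\|u\|_H^p = \Big(\tfrac12 - \tfrac{\varepsilon}{2}C_2\Big)\|u\|_H^2 - C_\varepsilon C_p\|u\|_H^p.$$
Fixing $\varepsilon$ so small that $\tfrac12 - \tfrac{\varepsilon}{2}C_2 \ge \tfrac14$ yields $\Phi(u) \ge \tfrac14\|u\|_H^2 - C_\varepsilon C_p\|u\|_H^p$. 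Since $p > 2$, on the sphere $\|u\|_H = r$ the right-hand side equals $r^2\big(\tfrac14 - C_\varepsilon C_p r^{p-2}\big)$, which is at least $\tfrac18 r^2$ once $r$ is chosen small enough that $C_\varepsilon C_p r^{p-2} \le \tfrac18$. Taking this $r$ and $\delta := \tfrac18 r^2 > 0$ completes the argument.

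The step I expect to be the main obstacle is the growth estimate, specifically securing the near-origin bound $|f(x,u)|\le\varepsilon|u|$ uniformly in $x\in\Omega$ from $(F_1)$–$(F_2)$; this is where the $C^1$-regularity and the vanishing of $f$ and $f_u$ at $u=0$ must be used with care, and the $L^\infty$-embedding of Lemma 2.7 is the device that confines attention to a single small $u$-interval so that the subcritical exponent $p$ in $(F_3)$ can do the rest.
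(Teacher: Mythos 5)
Your proposal is correct and follows essentially the same route as the paper: the decomposition $F(x,u)\le \tfrac{\varepsilon}{2}u^2+C_\varepsilon|u|^p$ obtained from $(F_2)$ and $(F_3)$, the embeddings of Lemma 2.7 to control the $L^2$ and $L^p$ integrals by powers of $\|u\|_H$, and the choice of a small radius $r$ exploiting $p>2$. Your version is in fact slightly more careful than the paper's, which leaves the embedding constants implicit and is somewhat loose with its choice of $\varepsilon$ and $r$.
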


  \begin{proof}
  	By $\left(F_2\right)$ and $\left(F_3\right)$, for $ 0<\varepsilon <C $, there is $ C_\varepsilon >0 $ such that
  	\[
  	   F\left(x,u\right)\leq\frac{\varepsilon}{2}u^2+C_\varepsilon |u|^p.
  	\]
  	
  	Take $ \|u\|_H\leq \left(\frac{C-\varepsilon}{4C C_\varepsilon }\right)^{\frac{1}{p-2}} $. Since $ p>2 $ and Lemma 2.7, we have
  	
  	 \begin{equation}
  		\begin{aligned}
  			\Phi\left(u\right)
  			& = \frac{1}{2}\left(\int_{\Omega \cup \partial \Omega}|\nabla u|^2 d \mu+\int_{\Omega} h u^2 d \mu\right)-\int_{\Omega} F\left(x,u\right) d \mu \\
  			& \geq \frac{1}{2} \|u\|_H^2-\int_{\Omega} \left(\frac{\varepsilon}{2}u^2+C_\varepsilon |u|^p\right) d \mu \\
  			& \geq \left(\frac{1}{2}-\frac{\varepsilon}{2C}\right)\|u\|_H^2-C_\varepsilon \|u\|_H^p\\
  			& \geq \frac{C-\varepsilon}{4 C}r^2:=\delta,
  		\end{aligned}
  	\end{equation}
  then we obtain the conclusion. The proof is completed.
  
  \end{proof}

Subsequently, we introduce the following.

  \begin{lem}
  	There exists some nonnegative function $ u \in H $ such that $\Phi (tu) \rightarrow-\infty$ as $ t\rightarrow +\infty $.
  \end{lem}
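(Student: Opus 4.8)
The plan is to exploit the super-quadratic growth of $F$ encoded in the Ambrosetti--Rabinowitz condition $(F_4)$, evaluating $\Phi$ along a ray $t \mapsto tu$ generated by a conveniently chosen nonnegative test function. Since the functional splits as $\Phi(tu) = \frac{t^2}{2}\|u\|_H^2 - \int_\Omega F(x, tu)\,d\mu$, the quadratic part grows only like $t^2$, so it will suffice to produce a nonnegative $u \in H$ along which the potential term grows strictly faster, namely like $t^\theta$ with $\theta > 2$.

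First I would convert $(F_4)$ into a pointwise lower bound on $F$. For fixed $x$ and $s \geq M$, condition $(F_4)$ gives $\frac{d}{ds}\log F(x,s) = \frac{f(x,s)}{F(x,s)} \geq \frac{\theta}{s}$, using that $F(x,s) > 0$ there; integrating from $M$ to $s$ yields
$$
F(x,s) \geq F(x,M)\left(\frac{s}{M}\right)^\theta, \qquad s \geq M.
$$
This is precisely the quantitative form underlying (1.15) announced in Remark 1.2.

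Next I would choose $u$. Rather than work with a general element of $H$, I would take $u \in C_c(\Omega)$ with $u \geq 0$ and $u \not\equiv 0$; such $u$ lies in $H$ and is supported on a finite vertex set $S$. Setting $t_0 := M/\min\{u(x): x \in S\}$, for every $t \geq t_0$ each support point satisfies $tu(x) \geq M$, so the lower bound applies simultaneously at all of $S$. Recalling that $\int_\Omega F(x,tu)\,d\mu = \sum_{x \in S}\mu(x)F(x,tu(x))$ and that $F(x,M) > 0$ by $(F_4)$, I obtain
$$
\int_\Omega F(x,tu)\,d\mu \geq t^\theta \sum_{x \in S} \mu(x)F(x,M)\left(\frac{u(x)}{M}\right)^\theta =: A\,t^\theta,
$$
with $A > 0$ a finite constant. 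Hence $\Phi(tu) \leq \frac{t^2}{2}\|u\|_H^2 - A t^\theta$, and since $\theta > 2$ the right-hand side tends to $-\infty$ as $t \to +\infty$, which is the assertion.

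The only delicate point is the uniformity of the super-quadratic estimate in the variable $x$: the factor $F(x,M)$ could in principle degenerate as $x$ ranges over an infinite domain $\Omega$. Restricting to a compactly supported $u$ sidesteps this entirely, since the relevant sum is finite and $\min_{x \in S} F(x,M) > 0$ is an honest positive number. This is the step I expect to demand the most care to phrase correctly; the remaining computation is routine.
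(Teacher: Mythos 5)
Your proposal is correct and follows essentially the same route as the paper: both integrate the logarithmic form of $(F_4)$ to get the pointwise bound $F(x,s)\geq F(x,M)\left(s/M\right)^{\theta}$ for $s\geq M$, and then evaluate $\Phi$ along the ray through a nonnegative finitely supported function (the paper simply specializes to the indicator of a single vertex $x_0$, which makes your uniformity concern about $\min_{x\in S}F(x,M)$ disappear trivially). The conclusion $\Phi(tu)\leq \frac{t^2}{2}\|u\|_H^2 - At^{\theta}\rightarrow -\infty$ for $\theta>2$ is exactly the paper's argument.
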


\begin{proof}
	While this proof approach bears resemblance to that in reference \cite{2}, it also exhibits distinct differences.
	
	By $\left(F_4\right)$, we have
   \begin{equation}
      F(x, u) \geq e^{-c}|u|^{\theta}-C, \quad \forall |u|\geq M, \quad (x,u) \in \Omega \times [0,+\infty).
   \end{equation} 
   Indeed, equation (1.14) can be reformulated as follows
    $$
    u \frac{\partial F(x, u)}{\partial u} \geq \theta F(x, u), \quad \forall |u| \geq M .
    $$
    If $u \geq M$, then
    $$
    \theta \frac{\partial u}{u} \leq \frac{\partial F(x, u)}{\partial u}.
    $$
    Integrating the above inequality from $M$ to $u$ yields
    $$
     \theta (\ln u-\ln M) \leq \ln F(x, u)-\ln F(x, M),
    $$
    that is,
    $$
    	\ln F(x, u) \geq \theta \ln u-C, 
    $$
    where $  C: =\theta \ln M-\ln F(x, M)  $.
    
    For any $u \leq-M$, then
    $$
      \theta \frac{\partial u}{u} \geq \frac{\partial F(x, u)}{F(x, u)}.
    $$
    Integrating the above inequality from $u$ to $-M$ yields
    $$
       \theta (\ln M-\ln |u|) \geq \ln F(x,-M)-\ln F(x, u),
    $$
    that is,
    $$
    \ln F(x, u) \geq \theta \ln |u|-C, \quad \forall u \leq -M,
    $$
    where $  C: = \theta\ln M-\ln F(x, -M)  $. 
    
    Overall, we deduce that
    $$
    \ln F(x, u) \geq \theta \ln |u|-C, \quad \forall |u| \geq M.
    $$
    Hence, (3.2) is valid.
    
    Let $x_0 \in \Omega$ be fixed. Take a function 
    $$
    u(x)= \begin{cases}1, & x=x_0, \\ 0, & x \neq x_0.\end{cases}
    $$
    As $t \rightarrow+\infty$, given that $\theta>2$ and $ G $ is locally finite, then we have
    $$
    \begin{aligned}
    	\Phi(t u) & =\frac{t^2}{2} \int_{\Omega \cup \partial \Omega}|\nabla u|^2 d \mu+\frac{t^2}{2} \int_{\Omega} h u^2 d \mu
    	-\int_{\Omega} F\left(x,tu\right) d \mu \\
    	& =\frac{t^2}{2} \sum_{x \in{\Omega \cup \partial \Omega}} \mu(x)|\nabla u|^2(x)+\frac{t^2}{2}\sum_{x \in{\Omega}} \mu\left(x\right) h\left(x\right) u^2(x)-\sum_{x \in{\Omega}}\mu\left(x\right) F\left(x, tu(x)\right) \\
    	& \leq \frac{t^2}{2} \sum_{x \in {\Omega \cup \partial \Omega}} \mu(x)|\nabla u|^2(x)+\frac{t^2}{2} \mu\left(x_0\right) h\left(x_0\right)-e^{-c}\mu(x_0) |t|^{\theta}+C \mu\left(x_0\right) \\
    	& \rightarrow-\infty.
    \end{aligned}
    $$
	The proof is completed.
	\end{proof}

 \begin{lem}
    If $h$ satisfies $\left(H_1\right)$ and $\left(H_2\right), f$ satisfies $\left(F_1\right)$ and $\left(F_4\right)$, then $\Phi$ satisfies the $(PS)_c$ $(Palais-Smale)$ condition for any $c \in \mathbb{R}$.
\end{lem}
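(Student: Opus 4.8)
The plan is to verify the $(PS)_c$ condition in two stages that are independent of the value of $c$: first show that any sequence $\{u_n\}\subset H$ with $\Phi(u_n)\to c$ and $\Phi'(u_n)\to0$ is bounded in $H$, and then extract from it a strongly convergent subsequence using the compactness already supplied by Lemma~2.7.

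For the boundedness I would exploit the Ambrosetti--Rabinowitz condition $(F_4)$ in the classical way. Using $(1.11)$ and $(1.12)$, form the combination
$$\Phi(u_n)-\frac{1}{\theta}\langle\Phi'(u_n),u_n\rangle=\Big(\frac{1}{2}-\frac{1}{\theta}\Big)\|u_n\|_H^2+\int_\Omega\Big(\frac{1}{\theta}f(x,u_n)u_n-F(x,u_n)\Big)\,d\mu.$$
On the set $\{x:|u_n(x)|\ge M\}$, condition $(F_4)$ gives $\tfrac{1}{\theta}f(x,u_n)u_n-F(x,u_n)\ge0$, so that portion of the integral is nonnegative. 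On the complementary set $\{x:|u_n(x)|<M\}$ the integrand is controlled by the continuity $(F_1)$, and combined with $(H_1)$--$(H_2)$ and the embedding of Lemma~2.7 it should contribute at most a fixed constant $-C$. Since $\|\Phi'(u_n)\|\to0$ and $\Phi(u_n)\to c$, the left-hand side is bounded by $|c|+1+\tfrac{1}{\theta}\|u_n\|_H$ for $n$ large, so I would arrive at an estimate of the form
$$\Big(\frac{1}{2}-\frac{1}{\theta}\Big)\|u_n\|_H^2\le C+|c|+1+\frac{1}{\theta}\|u_n\|_H,$$
and because $\theta>2$ the coefficient $\tfrac{1}{2}-\tfrac{1}{\theta}$ is strictly positive, forcing $\{u_n\}$ to be bounded in $H$.

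With boundedness established, Lemma~2.7 yields a subsequence and a limit $u\in H$ such that $u_n\rightharpoonup u$ in $H$, $u_n(x)\to u(x)$ for every $x\in\Omega$, and $u_n\to u$ in $L^q(\Omega)$ for all $q$. To upgrade this to strong convergence in $H$, I would test $\Phi'(u_n)$ with $u_n-u$. Writing $\langle\Phi'(u_n),u_n-u\rangle=\langle u_n,u_n-u\rangle_H-\int_\Omega f(x,u_n)(u_n-u)\,d\mu$, the left-hand side tends to $0$ because $\Phi'(u_n)\to0$ and $\{u_n-u\}$ is bounded; the nonlinear integral also tends to $0$, since the uniform $L^\infty$-bound of Lemma~2.7 together with the pointwise convergence and the continuity $(F_1)$ makes the Nemytskii term a compact perturbation and $u_n-u\to0$ in $L^q(\Omega)$. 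Hence $\langle u_n,u_n-u\rangle_H\to0$, and since $\langle u,u_n-u\rangle_H\to0$ by weak convergence, subtracting gives $\|u_n-u\|_H^2=\langle u_n-u,u_n-u\rangle_H\to0$, which is the desired strong convergence.

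The main obstacle I expect is the boundedness step, and within it the careful handling of the region $\{|u_n|<M\}$: when $\Omega$ is infinite one must ensure that this contribution is a genuine constant, independent of $n$, rather than a quantity that could reabsorb the leading $\|u_n\|_H^2$ term. This is precisely where the summability encoded in $(H_2)$ (equivalently the embedding $H\hookrightarrow L^1(\Omega)$) together with the continuity $(F_1)$ must be combined; once this uniform bound is secured, the quadratic-minus-compact structure of $\Phi$ and the compact embedding of Lemma~2.7 make the remainder routine.
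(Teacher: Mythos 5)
Your proposal is correct and follows essentially the same route as the paper: boundedness of the $(PS)_c$ sequence is obtained from the combination $\Phi(u_n)-\frac{1}{\theta}\langle\Phi'(u_n),u_n\rangle$, with $(F_4)$ controlling the region $\{|u_n|\ge M\}$ and continuity handling $\{|u_n|<M\}$, exactly as in the paper's estimate (3.6). The only difference is that you carry out explicitly the passage from boundedness to a strongly convergent subsequence (testing $\Phi'(u_n)$ against $u_n-u$ and using the embedding of Lemma~2.7), a step the paper compresses into an appeal to Lemma~2.1; this is a completion of detail rather than a different argument.
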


\begin{proof}
	We now prove that $ \Phi \left(u_n\right) $ satisfies the $ (\mathrm{PS})_c $ condition for any $ c \in \mathbb{R} $.
	Let $ \{u_n\} \in W_0^{1,2}\left(\Omega\right)$ be such that
	\begin{equation}
	   \Phi\left(u_n\right)\rightarrow c, \  \Phi^{\prime}\left(u_n\right) \rightarrow 0 \ (n \rightarrow \infty).
    \end{equation}
    This implies of course that there is a constant $ C>0 $ such that
    \begin{equation}
      \left|\frac{1}{2} \left(\int_{\Omega \cup \partial \Omega}|\nabla u_n|^2 d \mu+\int_{\Omega} h u_n^2 d \mu\right)-\int_{\Omega} F\left(x,u_n\right) d \mu \right| \leq C,
    \end{equation}
    \begin{equation}
        \left|\int_{\Omega \cup \partial \Omega}\Gamma\left(u_n, \varphi\right) d \mu
        +\int_{\Omega}h u_n \varphi d \mu-\int_{\Omega} f\left(x, u_n\right) \varphi d \mu\right| \leq \epsilon_k\|\varphi\|_H, \quad \forall
        \varphi \in H,
    \end{equation}
    where $ \varepsilon_k \rightarrow 0 $ as $ k\rightarrow +\infty$.
    
    According to Lemma 2.1, we merely need to demonstrate that $ \{u_n\} $ is bounded. By (3.4) and (3.5), for large $ n \in \mathbb{N} $, we have
    
   \begin{equation}
   	\begin{aligned}
   		& C+o(1)\left\|u_n\right\|_H \\
   		& \geq \Phi\left(u_n\right)-\frac{1}{\theta}\left\langle\Phi^{\prime}\left(u_n\right), u_n\right\rangle \\
   		&= \frac{1}{2} \int_{\Omega \cup \partial \Omega}|\nabla u_n|^2 d \mu+ \frac{1}{2}\int_{\Omega} h u_n^2 d \mu-\int_{\Omega} F\left(x,u_n\right) d \mu \\
   		& \quad \quad-\frac{1}{\theta}\left(\int_{\Omega \cup \partial \Omega}\Gamma\left(u_n, u_n\right) d \mu
   		+\int_{\Omega}h u_n^2 d \mu-\int_{\Omega} f\left(x, u_n\right) u_n d \mu\right) \\
   		&=\left(\frac{1}{2}-\frac{1}{\theta}\right) \left(\int_{\Omega \cup \partial \Omega}|\nabla u_n|^2 d \mu+\int_{\Omega} h u_n^2 d \mu\right) \\
   		& \quad \quad -\frac{1}{\theta} \int_{\Omega}\left(\theta F\left(x, u_n\right)-f\left(x, u_n\right) u_n\right) d \mu \\
   		& \geq\left(\frac{1}{2}-\frac{1}{\theta}\right) \left(\int_{\Omega \cup \partial \Omega}|\nabla u_n|^2 d \mu+\int_{\Omega} h u_n^2 d \mu\right) -C\\
   		&=\left(\frac{1}{2}-\frac{1}{\theta}\right)\left\|u_n\right\|_H^2-C .
   	\end{aligned}
   \end{equation}
   Since $ \theta>2 $, thus $ \{u_n\} $ is bounded. By Lemma 2.1, we can get that  $ \Phi \left(u_n\right) $ satisfies the $ (\mathrm{PS})_c $ condition. The proof is completed.

\end{proof}

We are ready to give the proof of Theorem 1.1.

\begin{proof}[Proof of Theorem 1.1]

  By Lemma $3.1-3.3$, $ \Phi \left(0\right)=0 $ and Lemma 2.2, there exist a function $ u \in H $ such that
 $$
 \Phi \left(u\right)=\inf _{\gamma \in \Gamma} \max _{t \in[0,1]} J(\gamma(t))>0$$
  and $\Phi^{\prime}(u)=0
 $, where
$$
\Gamma=\left\{\gamma \in C([0,1], H): \gamma(0)=0, \gamma(1)=e\right\}.
$$
Since 
$$
   \Phi \left(u\right)=c\geq \delta >0,
$$
then there exist a nontrivial solution $u \in H$ to the equation
(1.7). The proof is completed.
\end{proof}

\section{Proof of the Theorem 1.3 }

In the proof of our theorem, the following lemma is very important.
The following method is similar to the paper \cite{5}.
\begin{lem}
  Presume that the conditions encompassed in Theorem 1.3 are upheld. Suppose that
  $$
  \left\langle \Phi^{\prime}\left(u_n\right), u_n\right\rangle \rightarrow 0 \ (n \rightarrow \infty),\quad \forall\left\{u_n\right\} \in W_0^{1,2}(\Omega),
  $$
  then there exist a subsequence, still denoted by $\left\{u_n\right\}$, such that
  $$
  \Phi\left(t u_n\right) \leq \frac{1+t^2}{2 n}+\Phi\left(u_n\right), \quad \forall \ t>0 .
  $$
\end{lem}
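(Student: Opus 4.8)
The plan is to reduce the claimed inequality to a single pointwise-in-$x$ estimate supplied by the monotonicity hypothesis $(F_5)$, followed by a routine extraction of a subsequence. First I would record the exact algebraic identity linking $\Phi(tu_n)$, $\Phi(u_n)$ and $\langle\Phi'(u_n),u_n\rangle$. From (1.11) one has $\Phi(tu)=\tfrac{t^2}{2}\|u\|_H^2-\int_\Omega F(x,tu)\,d\mu$, while (1.12) gives $\|u\|_H^2=\langle\Phi'(u),u\rangle+\int_\Omega f(x,u)u\,d\mu$. Substituting the second into the first and subtracting $\Phi(u)$ yields
\[
\Phi(tu)-\Phi(u)=\frac{t^2-1}{2}\,\langle\Phi'(u),u\rangle+\int_\Omega R(x;t,u)\,d\mu,
\]
where $R(x;t,u):=\frac{t^2-1}{2}f(x,u)u-F(x,tu)+F(x,u)$. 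Everything then hinges on showing $R(x;t,u)\le 0$ for every $t>0$.

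To prove the pointwise bound I would fix $x$ and $u$ and study the single-variable function $\phi(t):=F(x,tu)-\tfrac{t^2}{2}f(x,u)u$, for which $R(x;t,u)=-(\phi(t)-\phi(1))$; the desired inequality is exactly $\phi(t)\ge\phi(1)$. Differentiating, $\phi'(t)=u\,f(x,tu)-t\,u\,f(x,u)$, and for $u>0$ this rewrites as $\phi'(t)=t u^2\big(\tfrac{f(x,tu)}{tu}-\tfrac{f(x,u)}{u}\big)$. By $(F_5)$ the bracketed difference has the same sign as $t-1$, so $\phi$ is decreasing on $(0,1]$ and increasing on $[1,\infty)$; hence $t=1$ is a global minimum and $\phi(t)\ge\phi(1)$ as required. (The degenerate case $u=0$ gives $R\equiv 0$.) Consequently $\Phi(tu_n)-\Phi(u_n)\le\frac{t^2-1}{2}\langle\Phi'(u_n),u_n\rangle$.

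For the final step I would use the hypothesis $\langle\Phi'(u_n),u_n\rangle\to 0$ to pass to a subsequence, still denoted $\{u_n\}$, along which $|\langle\Phi'(u_n),u_n\rangle|\le \tfrac1n$. Since $t^2-1\le|t^2-1|\le t^2+1$ for all $t>0$, this gives
\[
\Phi(tu_n)-\Phi(u_n)\le\frac{t^2-1}{2}\langle\Phi'(u_n),u_n\rangle\le\frac{|t^2-1|}{2n}\le\frac{1+t^2}{2n},
\]
which is precisely the asserted estimate.

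The main obstacle is the sign of $u_n$ in the pointwise step: hypothesis $(F_5)$ only controls $f(x,u)/u$ for $u>0$, so the monotonicity argument for $\phi$ goes through verbatim only where $u_n\ge 0$. Resolving this cleanly requires either restricting to nonnegative functions (natural in this ground-state setting, e.g.\ if $f$ is extended by $f(x,u)=0$ for $u\le 0$) or supplementing $(F_5)$ with the analogous monotonicity of $f(x,u)/u$ on $u<0$; I would make this convention explicit before invoking the estimate. The remaining ingredients---the identity, the calculus lemma, and the subsequence bound---are routine once that point is settled.
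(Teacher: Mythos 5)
Your proof is correct and is exactly the argument the paper delegates to its reference [7] (Li--Zhou): the paper's own ``proof'' consists only of extracting a subsequence with $-\frac1n<\langle\Phi'(u_n),u_n\rangle<\frac1n$ and citing that reference, while you supply the actual content --- the identity $\Phi(tu)-\Phi(u)=\frac{t^2-1}{2}\langle\Phi'(u),u\rangle+\int_\Omega R\,d\mu$ and the pointwise bound $R\le 0$ obtained from the monotonicity in $(F_5)$. Your closing caveat is also well taken: $(F_5)$ as stated only controls $f(x,u)/u$ for $u>0$, so the lemma genuinely requires either the analogous monotonicity for $u<0$ or a restriction to nonnegative functions (consistent with the paper's use of $u_n^{+}$ in Lemma 4.2), a point the paper leaves unaddressed.
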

\begin{proof}
		By $\left\langle \Phi^{\prime}\left(u_n\right), u_n\right\rangle \rightarrow 0$ as $n \rightarrow \infty$, we may assume that
	\begin{equation}
	   -\frac{1}{n}<\left\langle \Phi^{\prime}\left(u_n\right), u_n\right\rangle= \int_{\Omega \cup \partial \Omega} \Gamma\left(u_n, u_n\right) d \mu+\int_{\Omega} h u_n^2 d \mu-\int_{\Omega} f\left(x, u_n\right) u_n d \mu <\frac{1}{n}.
	\end{equation}
	The demonstration of this lemma parallels that in \cite{7}.
 The proof is completed.
\end{proof}

\begin{lem} 
  Presume that the conditions encompassed in Theorem 1.3 are upheld. If there is a sequence $\left\{u_n\right\} \in W_0^{1,2}(\Omega                                                                                                                       )$ such that
 \begin{equation}
   \Phi\left(u_n\right) \rightarrow c_0,\ \left(1+\left\|u_n\right\|\right)\left\|\Phi^{\prime}\left(u_n\right)\right\| \rightarrow 0,
 \end{equation}
   then $\left\{u_n\right\}$ has a convergent subsequence.
\end{lem}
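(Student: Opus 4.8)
The plan is to establish this lemma in two stages: first show that any sequence $\{u_n\}$ satisfying (4.2) is bounded in $H$, and then upgrade boundedness to a strongly convergent subsequence using the compactness built into Lemma 2.7. The second stage is routine on these graphs: once $\{u_n\}$ is bounded, Lemma 2.7 yields (up to a subsequence) a limit $u \in H$ with $u_n \rightharpoonup u$ weakly in $H$ and $u_n \to u$ in every $L^q(\Omega)$ and pointwise. Writing $\langle \Phi'(w), \varphi\rangle = \langle w, \varphi\rangle_H - \int_\Omega f(x,w)\varphi\, d\mu$, I would test with $\varphi = u_n - u$ and subtract to obtain
\[
\|u_n - u\|_H^2 = \langle \Phi'(u_n) - \Phi'(u), u_n - u\rangle + \int_\Omega \big(f(x,u_n) - f(x,u)\big)(u_n - u)\, d\mu .
\]
Here $\langle\Phi'(u_n), u_n-u\rangle \to 0$ since $\|\Phi'(u_n)\| \to 0$ and $\{u_n-u\}$ is bounded, $\langle\Phi'(u),u_n-u\rangle\to 0$ by weak convergence, and the integral tends to $0$ by the growth bound $(F_3)$ together with the $L^q$-convergence; hence $u_n\to u$ strongly in $H$.

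So the real work is the boundedness stage, which is delicate because Theorem 1.3 replaces the Ambrosetti--Rabinowitz condition $(F_4)$ by the weaker pair $(F_5)$--$(F_6)$; this is exactly what Lemma 4.1 (the monotonicity/Jeanjean-type inequality) is designed to exploit. I would argue by contradiction, assuming $\|u_n\|_H \to \infty$, and set $v_n := u_n/\|u_n\|_H$, so $\|v_n\|_H = 1$. Since the Cerami condition (4.2) forces $\langle\Phi'(u_n),u_n\rangle \to 0$, Lemma 4.1 applies and (after passing to a subsequence) gives $\Phi(t u_n) \le \frac{1+t^2}{2n} + \Phi(u_n)$ for every $t>0$; by Lemma 2.7 I may also assume $v_n \rightharpoonup v$ weakly, $v_n\to v$ in $L^q(\Omega)$ and pointwise. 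I then split on whether $v=0$. If $v = 0$, I fix $R>0$ and choose $t = t_n := R/\|u_n\|_H \to 0$, so that $t_n u_n = R v_n$ and the Lemma 4.1 inequality yields $\limsup_n \Phi(R v_n) \le c_0$; but $\Phi(R v_n) = \tfrac{R^2}{2} - \int_\Omega F(x, R v_n)\, d\mu$ and, since $v_n \to 0$ in $L^1\cap L^p$, the integral vanishes by $(F_3)$, leaving $\tfrac{R^2}{2} \le c_0$ for all $R>0$ --- impossible. If $v \ne 0$, I pick $x_0$ with $v(x_0)\ne 0$; then $|u_n(x_0)| = \|u_n\|_H |v_n(x_0)| \to \infty$, so by $(F_6)$ (equivalently (1.15)) $F(x_0,u_n(x_0))/u_n(x_0)^2 \to +\infty$, whence $F(x_0,u_n(x_0))/\|u_n\|_H^2 = \big(F(x_0,u_n(x_0))/u_n(x_0)^2\big)v_n(x_0)^2 \to +\infty$; combined with a lower bound on $F$ this forces $\int_\Omega F(x,u_n)\,d\mu/\|u_n\|_H^2 \to +\infty$, contradicting $\Phi(u_n)/\|u_n\|_H^2 = \tfrac12 - \int_\Omega F(x,u_n)\,d\mu/\|u_n\|_H^2 \to 0$ (the latter because $\Phi(u_n)\to c_0$ while $\|u_n\|_H\to\infty$). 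Either way we reach a contradiction, so $\{u_n\}$ is bounded.

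I expect the main obstacle to be the $v\ne 0$ case, specifically justifying that the full integral $\int_\Omega F(x,u_n)\,d\mu/\|u_n\|_H^2$ diverges rather than just the single term at $x_0$. This requires a uniform lower control on $F$ (or a dominating function) so that Fatou's lemma applies to the sum over all vertices; such a bound has to be extracted from the monotonicity $(F_5)$ (which gives $\tfrac12 u f(x,u) \ge F(x,u)$ and bounds $F$ below) together with the growth bound $(F_3)$ and the $L^2$-domination coming from $v_n\to v$ in $L^2(\Omega)$ via Lemma 2.7. Making this domination clean on a possibly infinite domain $\Omega$, where $(H_2)$ rather than finiteness supplies the compactness, is the one place where genuine care is needed.
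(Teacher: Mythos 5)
Your overall strategy --- argue boundedness by contradiction after normalizing $v_n = u_n/\|u_n\|_H$, exploit Lemma 4.1 at $t_n \sim 1/\|u_n\|_H$, and upgrade boundedness to strong convergence via Lemma 2.7 and $(F_3)$ --- is the same one the paper uses, and your $v=0$ branch is essentially the paper's step (2): the paper just fixes the single radius $2\sqrt{c_0}$ (so that $\Phi(v_n)\to \tfrac12\|v_n\|^2 = 2c_0 > c_0$) instead of letting $R\to\infty$, but the mechanism is identical. Your convergence stage is also fine (the paper merely asserts it).

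The genuine gap is in your $v\ne 0$ branch. First, $(F_5)$ and $(F_6)$ are one-sided: $(F_6)$ controls $f(x,u)/u$ only as $u\to+\infty$, so from $v(x_0)\ne 0$ you may conclude $F(x_0,u_n(x_0))/u_n(x_0)^2\to+\infty$ only when $v(x_0)>0$; the subcase $v\le 0$, $v\ne 0$ falls through your dichotomy entirely. Second, even when $v(x_0)>0$, the lower control of $\sum_x\mu(x)F(x,u_n(x))/\|u_n\|_H^2$ that you yourself flag as the delicate point is not available from the stated hypotheses: $(F_3)$ gives only $|F(x,u)|\le C(|u|+|u|^p)$ with $p>2$, and $\|u_n\|_H^{p-2}|v_n(x)|^p$ is an indeterminate product on $\{v=0\}$; the quadratic lower bound $F(x,u)\ge -C(1+u^2)$ that would rescue this follows from $(F_5)$ plus $(F_3)$ only for $u>0$, and nothing in Theorem 1.3 bounds $F(x,u)$ from below for $u<0$ beyond $(F_3)$. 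The paper sidesteps both problems by running this step through the \emph{derivative} rather than the energy: from $\langle\Phi'(u_n),u_n\rangle=o(1)$ it writes $\|v_n\|^2=\int_\Omega \frac{f(x,u_n^{+})}{u_n^{+}}(v_n^{+})^2\,d\mu+o(1)$ (implicitly taking $f(x,u)=0$ for $u\le 0$), and since $f(x,u_n^{+})/u_n^{+}$ is eventually bounded below by any constant wherever $v^{+}>0$ while the left side equals $4c_0$, it forces $v^{+}=0$; only then does it invoke Lemma 4.1 on the fixed sphere to get the contradiction $2c_0\le c_0$. To repair your argument, replace the energy-based $v\ne 0$ case by this derivative identity (Fatou together with the lower bound on $f(x,u)/u$ supplied by $(F_5)$), and treat the negative part explicitly, e.g.\ by adopting the paper's implicit convention on $f$ for $u\le 0$.
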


\begin{proof} 
  Since (1.13) holds, we only need to prove that $\left\{u_n\right\}$ is bounded in 
  $W_0^{1,2}(\Omega) $.                                                                                                                    If it is not true, we assume $\left\|u_n\right\| \rightarrow+\infty$ as $n \rightarrow \infty$, and set
  $$
  v_n=\frac{\left(4 c_0 \right)^{1 / 2} u_n}{\left\|u_n\right\|},\ v_n^{+}=\frac{\left(4 c_0\right)^{1 / 2} u_n^{+}}{\left\|u_n\right\|}.
  $$
  By Lemma 2.7, then  there is $v \in W_0^{1,2}(\Omega)$ such that 
  \begin{equation}
    \begin{cases}
    	v_n \rightarrow v, v_n^{+} \rightarrow v^{+}, & \text { in } W_0^{1,2}(\Omega), \\ v_n \rightarrow v, v_n^{+} \rightarrow v^{+}, & \text { in } L^2(\Omega), \\ v_n(x) \rightarrow v(x), & 
    	\forall \ x \in \Omega .
    \end{cases}
  \end{equation}

  (1) By $\left(F_6\right)$, there exists a positive constant $ C $ and sufficiently large $ n $,
  \begin{equation}
     \frac{f\left(x,u_n^+\right)}{u_n^+}\geq C, \ \text{uniformly in}  \ x \in \Omega_1=\left\{x \in \Omega: v^+>0\right\} 
  \end{equation}
 
  From (1.12), we derive
  $$
  \int_{\Omega \cup \partial \Omega} \Gamma\left(u_n, u_n\right) d \mu+\int_{\Omega} h u_n^2 d \mu-\int_{\Omega} f\left(x, u_n\right) u_n d \mu=o(1).
  $$
  Coupling this with  (4.4) results in 
  $$
  \begin{aligned}
  	4 c_0 & =\lim_{n \rightarrow \infty} \left\|v_n\right\|^2=\lim _{n \rightarrow \infty} \int_{\Omega} \frac{f\left(x, u_n^{+}\right)}{u_n^{+}}\left(v_n^{+}\right)^2 d \mu \\
  	& \geq C \lim_{n \rightarrow \infty} \int_{\Omega_1}\left(v_n^{+}\right)^2 d \mu=
  	C \int_{\Omega_1}\left(v^{+}\right)^2 d \mu,
  \end{aligned}
  $$
  therefore
  \begin{equation}
    v^{+}=0, \ x \in \Omega.
  \end{equation}
  Thus by (1.13) and (4.5), we have
  $$
  \int_{\Omega} F\left(x, v_n\right) d \mu=\int_{\Omega} F\left(x, v_n^{+}\right) d \mu \rightarrow 0 \ (n \rightarrow \infty),
  $$
  which implies
  \begin{equation}
    \Phi\left(v_n\right) \rightarrow 2 c_0.
  \end{equation}
  
  (2) Let $t_n=\frac{\left(4 c_0 / b\right)^{1 / 2}}{\left\|u_n\right\|}$, then $t_n \rightarrow 0$ ($n \rightarrow \infty$). By Lemma 4.1, we get
   \begin{equation}
     \Phi\left(v_n\right)=\Phi\left(t_n u_n\right) \leq \frac{1+t_n^2}{2 n}+\Phi\left(u_n\right) \rightarrow c_0.
  \end{equation}
  Since $c_0>0$, a contradiction to (4.6). The proof is completed.
 \end{proof}

  \begin{proof}[Proof of Theorem 1.3]
  
 By Lemma 4.1-4.2, $ \Phi \left(0\right)=0 $ and Lemma 2.2, there exist a function $ u \in H $ such that
$$
\Phi \left(u\right)=\inf _{\gamma \in \Gamma} \max _{t \in[0,1]} J(\gamma(t))>0$$
and $\Phi^{\prime}(u)=0
$, where 
$$
  \Gamma=\left\{\gamma \in C([0,1], H): \gamma(0)=0, \gamma(1)=e\right\}.
$$
Noticing that
$$
 \Phi \left(u\right)=c\geq \delta >0,
$$
hence there exist a nontrivial solution $u $. The proof is completed.
 \end{proof}

\section{Proof of Theorem 1.5}

In this section, we are ready to give the proof of Theorem 1.5.

\begin{proof}[Proof of Theorem 1.5]

Take $ \zeta=\frac{1}{2}$. The idea of the proof is predicated on the application of Lemma 2.8 to the functional 
$$
\Phi\left(u\right)=\frac{1}{2}\left\|u\right\|_H^2-\int_{\Omega} F\left(x, u\right) d \mu,
$$
where 
$$
\Theta\left(u\right):=\left\|u\right\|_H^2,
$$
as well as 
$$
\Psi(u):=\int_{\Omega} F(x, u(x)) d \mu
$$
for any $ u \in W_0^{1,2}(\Omega) $.

It is readily apparent that  $\Theta$ is sequentially weakly lower semicontinuous, that is,
$$
\liminf_{n \rightarrow \infty}\left\|u_n\right\|_H \geq\|u\|_H.
$$
Moreover, according to Lemma 2.7, it is evident that  $\Theta$ is coercive in $W_0^{1,2}(\Omega)$.

 There exist  a constant $ \sigma \in[0,1]$, by (1.13), Lemma 2.7 and Hölder inequality, we have
 $$
 \begin{aligned}
 	\left|\Psi\left(u_n\right)-\Psi(u)\right|&=\left|\int_{\Omega} F\left(x, u_n(x)\right) d u-\int_{\Omega} F(x, u(x)) d \mu\right| \\
 	& \leq \int_{\Omega}\left|F\left(x, u_n(x)\right)-F(x, u(x))\right| d \mu \\
 	& =\left|\int_{\Omega} f\left(x, u+\sigma\left(u_n-u\right)\right)\left(u_n-u\right) d \mu\right| \\
 	& \leq \left(\int_{\Omega}\left|f\left(x, u+\sigma\left(u_n-u\right)\right)\right|^{\frac{p-1}{p}}d \mu\right)^{\frac{p-1}{p}}\left(\int_{\Omega}\left|u_n-u\right|^p d \mu\right)^{\frac{1}{p}}
 \end{aligned}
 $$
 
$$
\begin{aligned}
	& \leq C\left(\int_{\Omega}\left(1+\left|u_n\right|^{p-1}+|u|^{p-1}\right)^{\frac{p}{p-1}} d \mu\right)^{\frac{p-1}{p}}\left(\int_{\Omega}\left|u_n-u\right|^p d \mu\right)^{\frac{1}{p}} \\
	& \leq C\left(\int_{\Omega}\left|u_n-u\right|^p\right)^{\frac{1}{p}} \rightarrow 0, \quad n \rightarrow \infty,
	&
\end{aligned}
$$
thus $\Psi$ is sequentially weakly continuous in $W_0^{1,2}(\Omega)$.

According to Lemma 3.2, it is observed that the functional $ \Phi $ is unbounded from below in $W_0^{1,2}(\Omega)$. As per Lemma 3.3, it is understood that the functional $ \Phi $ satisfies the 
$(PS)_c$ condition for any $c \in \mathbb{R}$.

Finally, let $\rho>0$ and
$$
 \chi(\rho):=\inf _{u \in B_\rho} \frac{\left(\sup _{v \in B_\rho} \Psi(v)\right)-\Psi(u)}{\rho-\|u\|_H^2},
$$
where
$$
\mathbb{B}_\rho=\left\{v \in W_0^{1,2}(\Omega):\|v\|_H<\sqrt{\rho}\right\}.
$$
For any $u \in \mathbb{B}_\rho$ and $0 \in \mathbb{B}_\rho$, we have

\begin{equation}
	\begin{aligned}
		\chi(\rho) 
		&	\leq \frac{\sup _{v \in \mathbb{B}_{\rho}}      \Psi(v)}{\rho-\|u\|_H^2}\\
		& \leq \frac{1}{\rho} \sup_{v \in \mathbb{B}_{\rho}} \Psi(v) \\
		& \leq \frac{1}{\rho} \sup _{v \in \overline{\mathbb{B}}_{\rho}}\left|\int_{\Omega} F(x, v(x)) d \mu\right| \\
		& \leq \frac{1}{\rho} \sup_{v \in \overline{\mathbb{B}}_{\rho}} \int_{\Omega}|F(x, v(x))| d \mu .
	\end{aligned}
\end{equation}

Now, assume that the function $ h $ complies with the assumption $ \left(H_1\right) $. Then, if $ v \in \overline{\mathbb{B}}_{\rho} $,
by Lemma 2.7 we get 
\begin{equation}
   |v(x)| \leq \left(\mu_{\min}h_0\right)^{\frac{1}{2}}\|v\| \leq \left(\mu_{\min}h_0\right)^{\frac{1}{2}}\|v\|_H \leq
   \left(\mu_{\min}h_0\right)^{\frac{1}{2}} \sqrt{\rho}\quad \text { for any } x \in \Omega.
\end{equation}
Since
\begin{equation}
	|F(x, v(x))| \leq \max_{\substack{x \in \Omega \\|u| \leq \left(\mu_{\min}h_0\right)^{\frac{1}{2}} \sqrt{\rho}}}\left|\int_0^u f(x, t) d t\right| .
\end{equation}
Hence
\begin{equation}
	\int_{\Omega}|F(x, v(x))| d \mu \leq \max_{\substack{x \in \Omega \\|u| \leq \left(\mu_{\min}h_0\right)^{\frac{1}{2}} \sqrt{\rho}}}\left|\int_0^u f(x, t) d t\right||\Omega|
\end{equation}
for any $ v \in \overline{\mathbb{B}}_{\rho} $.

By $(5.1)$ and $(5.4)$ we have that
$$
\chi(\rho) \leq \frac{1}{\rho} \max_{\substack{x \in \Omega \\|u| \leqslant\left(\mu_{\min} h_0 \rho\right)^{\frac{1}{2}}}}\left|\int_0^u f(x, t) d t\right| \cdot|\Omega| \leq \frac{1}{2(\beta+1)}|\Omega|,
$$
provide $\beta$ satisfies condition (1.16).

If the function $h(x)$ satisfies $\left(H_3\right)$, we can argue in the same way, merely substituting $(5.2)$ with the following inequality
$$
\begin{aligned}
	|v(x)| & \leq\left(\frac{1}{\mu_{\min } h_0}\right)^{\frac{1}{2}}\|v\| \\
	& \leq\left(\mu_{\min } h_0\right)^{\frac{1}{2}}\|v\| \\
	& \leq \frac{\left(\mu_{\min } h_0\right)^{\frac{1}{2}}}{\sqrt{1-\mu_{\min } h_0 \int_{\Omega} h(x) d \mu}}\|v\|_H \\
	& \leq \frac{\left(\mu_{\min } h_0\right)^{\frac{1}{2}}}{\sqrt{1-\mu_{\min } h_0 \int_{\Omega}h(x) d \mu}} \sqrt{\rho} .
\end{aligned}
$$

Considering Lemma 2.8 and incorporating equations $ \left(F_7\right) $ and Lemma 3.3, we deduce that equation (1.7) is guaranteed to have at least two nontrivial solutions one of which lies in $\mathbb{B}_\rho .$ The proof is completed.	
\end{proof}

\section{Proof of Theorem 1.7}

In this section, we prove Theorem 1.7 by Lemma 2.8. Theorem 1.7 is a special form of Theorem 1.5. We prove a conclusion as following.

\begin{proof}[Proof of Theorem 1.7]
Let $ M_0=\kappa \sqrt{\rho}$. If $(H_1)$ holds, we have
$$
2(\beta+1) \max _{(x, u) \in \Omega \times\left[-M_0, M_0\right]}\left|\int_0^u f(x, t) d t\right| \leq \rho=\frac{M_0^2}{\kappa^2},
$$
that is,
$$
\max _{(x, u) \in \Omega \times\left[-M_0, M_0\right]}\left|\int_0^u f(x, t) d t\right| \leq \frac{M_0^2}{2(\beta+1) \kappa^2}=\frac{M_0^2}{2(\beta+1) \mu_{\min} h_0}.
$$

If $(H_3)$ holds, then
$$
2(\beta+1)\max _{(x, u) \in \Omega \times\left[-M_0, M_0\right]}\left|\int_0^u f(x, t) d t\right|\leq \frac{M_0^2\left(1-\mu_{\min} h_0 \int_{\Omega} h(x) d \mu\right)}{\mu_{\min} h_0},
$$
that is,
$$
  \max_{(x, u) \in \Omega \times\left[-M_0, M_0\right]}\left|\int_0^u f(x, t) d t\right| \leq \frac{M_0^2\left(1-\mu_{\min } h_0 \int_{\Omega} h(x) d \mu\right)}{2(\beta+1) \mu_{\min } h_0} 
 \leq \frac{M_0^2}{2(\beta+1) \mu_{\min} h_0}.
$$

Overall, we deduce that
$$
\max_{(x, u) \in \Omega \times\left[-M_0, M_0\right]}\left|\int_0^u f(x, t) d t\right|\leq \frac{M_0^2}{2(\beta+1) \mu_{\min } h_0}.
$$
Thus
$$
1<\beta+1 \leq \frac{M_{0}^{2}}{2{{\mu }_{\min }}{{h}_{0}}\underset{(x,u)\in \Omega \times \left[ -{{M}_{0}},{{M}_{0}} \right]}{\mathop{\max }}\,\left| \int_{0}^{u}{f\left( x,t \right)dt} \right|}.
$$
Hence, applying Theorem 1.5 we obtain that equation (1.7) is guaranteed to have at least two nontrivial solutions one of which lies in 
$$
\mathbb{B}_{\frac{M_0^2}{\mu_{\min} h_0}}:=\left\{u \in W_0^{1,2}(\Omega):\|u\|_H<\sqrt{\frac{M_0^2}{\mu_{\min } h_0}}\right\}.
$$
 The proof is completed.
\end{proof}

\section{Appendix}

This appendix primarily serves as a supplemental elaboration for   $(1.2)-(1.4)$ in the Introduction. Firstly, it elucidates the relationship between (1.2) and (1.3). Secondly, no proof for the general form of  (1.4) is provided in reference \cite{1}, which we will complete in this section.

For any $u: V \rightarrow \mathbb{R}$, the function $\Delta u$ is defined by
$$
\Delta u(x)=\frac{1}{\mu(x)} \sum_y u(y) \mu_{x y}-u(x)=\frac{1}{\mu(x)} \sum_y \mu_{x y}(u(y)-u(x)).
$$
Indeed, by using  $(1.1)$, we have
$$
\begin{aligned}
	\Delta u(x) & =\frac{1}{\mu(x)} \sum_y u(y) \mu_{x y}-u(x) \\
	& =\frac{1}{\mu(x)} \sum_y u(y) \mu_{x y}-\frac{1}{\mu(x)} \sum_y \mu_{x y} u(x) \\
	& =\frac{1}{\mu(x)} \sum_y(u(y)-u(x)) \mu_{x y} .
\end{aligned}
$$
Moreover,

$$
\begin{aligned}
	\Gamma(u, v)(x)= & \frac{1}{2}\{\Delta(u(x) v(x))-u(x) \Delta v(x)-v(x) \Delta u(x)\} \\
	= & \frac{1}{2}\left\{\frac{1}{\mu(x)} \sum_{y \sim x}  \mu_{x y}(u(y) v(y)-u(x) v(x))-u(x) \frac{1}{\mu(x)} \sum_{y \sim x}  \mu_{x y}(v(y)-v(x))\right\} \\
	& -\frac{1}{2} v(x) \frac{1}{\mu(x)} \sum_{y \sim x}  \mu_{x y}(u(y)-u(x)) 
\end{aligned}
$$

$$
\begin{aligned}
	= & \frac{1}{2 \mu(x)}\left\{\sum_{y \sim x}  \mu_{x y}(u(y) v(y)-u(x) v(x)-u(x)(v(y)-v(x))-v(x)(u(y)-u(x)))\right\} \\
	= & \frac{1}{2 \mu(x)}\left\{\sum_{y \sim x}  \mu_{x y}(u(y) v(y)-u(x) v(y)-v(x) u(y)+v(x) u(x))\right\} \\
	= & \frac{1}{2 \mu(x)} \sum_{y \sim x}  \mu_{x y}(u(y)-u(x))(v(y)-v(x)) ,
\end{aligned}
$$
that is, $ (1.4) $ holds.	$\hfill\square$\\

\noindent {\bf \textcolor[rgb]{0.00,0.00,1.00}{Acknowledgements}}

The authors would like to express their sincere
thanks to  the reviewers for their invaluable suggestions and remarks. The paper was supported by NSFC (12271373, 12171326).

  \end{document}